\newtheorem{thm}{Theorem}
\newtheorem{prop}[thm]{Proposition}
\newtheorem{lem}[thm]{Lemma}
\newtheorem{cor}[thm]{Corollary}
\theoremstyle{definition}
\newcommand{\GG}{{\mathbb G}}
\newcommand{\ZZ}{{\mathbb Z}}
\newcommand{\CC}{{\mathbb C}}
\newcommand{\SL}{\operatorname{SL}}
\newcommand{\Sp}{\operatorname{Sp}}
\newcommand{\Hom}{\operatorname{Hom}}
\newcommand{\Lie}{\operatorname{Lie}}
\begin{document}

\title[Minimal epimorphic subgroups]{On minimal epimorphic subgroups in simple algebraic groups of rank $2$}
\author{I.I. Simion}
\address[I.I. Simion]
        { Department of Mathematics\\
          Babe{\c s}-Bolyai University\\
          Str. Ploie\c sti 23-25, 400157, Cluj-Napoca, Rom\^ania}
        \email{iulian.simion@ubbcluj.ro}
\author{D.M. Testerman}
\address[D.M. Testerman]
        { Institute of Mathematics\\
          \'Ecole Polytechnique F\'ed\'erale de Lausanne,
          Station 8, Lausanne, CH-1015, Switzerland
        }
        \email{donna.testerman@epfl.ch}
\thanks{Both authors acknowledge the support of the Swiss National Science Foundation through grant number ${\rm{IZSEZ0}}\textunderscore190091$. I.I. S. was also supported by a grant of the Romanian Ministry of Research, Innovation and Digitalization, CNCS/CCCDI–UEFISCDI, project number PN-III-P4-ID-PCE-2020-0454, within PNCDI III}
\subjclass[2020]{Primary 20G05; Secondary 20G07}
\keywords{simple linear algebraic group, epimorphic subgroup}
\maketitle

\begin{abstract}
  The category of linear algebraic groups admits non-surjective epimorphisms. For simple algebraic groups of rank $2$ defined over algebraically closed fields, we show that the minimal dimension of a closed epimorphic subgroup is $3$.
\end{abstract}

\section{Introduction}

Let $\phi:H\rightarrow G$ be a homomorphism of linear algebraic groups defined over an algebraically closed field. The map $\phi$ is an epimorphism if it admits right cancellation, i.e. whenever $\psi_1\circ\phi=\psi_2\circ\phi$ for homomorphisms $\psi_1,\psi_2$ we have $\psi_1=\psi_2$. 
An \emph{epimorphic subgroup} $H\subseteq G$ is a subgroup for which the inclusion map is an epimorphism. If $\phi$ is a non-surjective epimorphism then $\phi(H)$ is a proper epimorphic subgroup of $G$.

A study of epimorphic subgroups in linear algebraic groups was initiated by Bien and Borel in \cite{BB1,BB2} where criteria for recognizing epimorphic subgroups are established (see \S\ref{subsec:criteria}). Their work follows \cite{Bergman}, where non-surjectivity of epimorphisms is studied for Lie algebras, and extends the list of categories for which the difference between surjectivity and epimorphicity was studied in \cite{Reid}. It follows from \cite{BB1} that minimal epimorphic subgroups are solvable and that maximal proper epimorphic subgroups are parabolic subgroups. Presently, a classification of epimorphic subgroups appears out of reach. 

While maximal epimorphic subgroups are well understood, the current state of knowledge on minimal dimensional epimorphic subgroups is less satisfactory. In case $G$ is a simple algebraic group, the minimal dimension of such subgroups appears to be $3$. If the ground field is $\CC$, this is shown to hold by Bien and Borel in \cite{BB1} (see \S\ref{subsec:criteria}). Here we consider simple groups of rank $2$ defined over fields of positive characteristic.
\begin{thm}
  \label{thm:epi}
  For a simple algebraic group of rank $2$ defined over an algebraically closed field of characteristic $p>0$, the minimal dimension of a closed epimorphic subgroup is $3$.
\end{thm}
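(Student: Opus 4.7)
The plan is to establish the theorem by separately proving an upper bound (existence of a closed epimorphic subgroup of dimension $3$) and a lower bound (no closed subgroup of dimension at most $2$ is epimorphic). The key tool is the Bien-Borel criterion that $H \le G$ is epimorphic if and only if $V^H = V^G$ for every rational $G$-module $V$.

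For the upper bound, first observe that a $3$-dimensional connected algebraic group which is epimorphic in a simple group $G$ would have to be non-solvable (any connected solvable subgroup of $G$ lies in a Borel $B$ and one verifies $V^H \ne V^G$ using the adjoint representation, as described below), hence isomorphic to $\SL_2$ or $\operatorname{PGL}_2$. A direct check of the principal embeddings of $\SL_2$ in each of $A_2$, $B_2$, $G_2$ shows that there exists a non-trivial irreducible rational $G$-module $V$ whose restriction to the principal $\SL_2$ contains a trivial summand, ruling out epimorphicity. Thus any $3$-dimensional closed epimorphic subgroup must be non-connected. I would build $H$ as $H^0 \rtimes \langle n \rangle$, where $H^0 \le B$ is a $3$-dimensional connected solvable subgroup of the form $T_1 \cdot U'$ (with $T_1$ a $1$-parameter subtorus of a maximal torus $T$ and $U'$ a $2$-dimensional unipotent subgroup spanned by specific root subgroups), and $n \in N_G(T)$ represents a Weyl group element chosen so that its action on $T_1$ eliminates all non-trivial rational characters of $H$. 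For each rank-$2$ type one selects $T_1$, $U'$, $n$ so that no restricted irreducible $G$-module has an $H$-fixed vector; by Steinberg's tensor product theorem this extends to all irreducibles and hence to all rational $G$-modules.

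For the lower bound, the connected case is handled uniformly via the adjoint representation $V = \mathfrak{g}$. If $H$ is connected of dimension at most $2$, then $H$ is solvable; for $H$ abelian one has $\mathfrak{h} \subseteq \mathfrak{g}^H$, so $\dim \mathfrak{g}^H \ge \dim H \ge 1$; for the $2$-dimensional non-abelian case $H = T_1 \ltimes U_\gamma$ one computes directly that $\mathfrak{g}^H = \mathfrak{z}_{\mathfrak{g}}(\mathfrak{h})$ contains $\ker\gamma \subseteq \mathfrak{t}$, which has dimension $1$. Since $\mathfrak{g}^G$ is the centre of $\mathfrak{g}$ and vanishes in good characteristic, one obtains $V^H \ne V^G$, with bad-characteristic cases handled individually. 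For non-connected $H$ with $\dim H^0 \in \{0, 1, 2\}$, one proceeds by cases on the structure of $H^0$ and the action of the finite quotient $H/H^0$ on $\mathfrak{g}^{H^0}$. The most delicate subcase is $H^0 = T$ a maximal torus with $H/T$ acting on $\mathfrak{t}$ without fixed vectors; here one passes to $\mathfrak{g}^{\otimes 2}$ and invokes the invariant theory of Weyl group representations, noting that the $W$-invariants in the weight-zero subspace of such a tensor product strictly contain the corresponding $G$-invariants.

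The main obstacle is the lower-bound argument for non-connected subgroups, particularly when $H^0 = T$ is a maximal torus with a sufficiently rich component group, and when $H$ is finite so that no parabolic-containment constraint is available; in the finite case one must directly exhibit non-trivial $H$-invariants in specific $G$-modules, possibly using the known classification of finite subgroups of rank-$2$ simple groups. Additionally, small-characteristic issues ($p = 2$ for $B_2$, $p = 2, 3$ for $G_2$, $p = 3$ for $A_2$) complicate both the upper-bound construction (through non-split extensions in the representation theory, making the Steinberg reduction more delicate) and the lower-bound ruling-out (through a possibly non-trivial centre of $\mathfrak{g}$ contributing to $\mathfrak{g}^G$), so each bad-characteristic case may require individualized treatment.
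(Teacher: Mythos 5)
There are two serious problems with your proposal, one in each half. For the upper bound, your opening claim --- that a connected solvable subgroup of $G$ cannot be epimorphic because $V^H\ne V^G$ on the adjoint module --- is false: Borel subgroups are connected, solvable and epimorphic (parabolics are exactly the maximal proper epimorphic subgroups), and by Bien--Borel the \emph{minimal} epimorphic subgroups are necessarily solvable. The paper's $3$-dimensional examples are connected solvable: for $\SL_3(k)$ the radical of a maximal parabolic, and for $\Sp_4(k)$ and $G_2(k)$ a group $B_AU_\beta$ with $B_A$ the Borel subgroup of a Frobenius-twisted diagonal $A_1$ and $U_\beta$ a root group it normalizes, shown to be epimorphic because $\langle A,U_\beta\rangle=G$. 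Your deduction that an epimorphic $H$ of dimension $3$ must be non-connected is therefore wrong, and the non-connected construction you sketch is both unnecessary and unsubstantiated.

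For the lower bound, the reduction to connected (indeed connected solvable) $H$ is immediate from the Bien--Borel criterion that $H$ is epimorphic iff $H^{\circ}$ (or a Borel subgroup of $H^{\circ}$) is, so your worries about component groups and finite subgroups are moot. But your treatment of the genuinely hard case is a gap: you write the $2$-dimensional non-abelian case as $H=T_1\ltimes U_\gamma$ with $U_\gamma$ a root subgroup, which is the trivial subcase ($H$ then lies in a Levi, hence in a proper reductive subgroup, and Richardson/Cline--Parshall--Scott finishes it). In characteristic $p>0$ a $1$-dimensional unipotent subgroup normalized by a subtorus need not be a root group: it can be a twisted diagonal $u(x)=\prod_i u_{\alpha_i}(c_ix^{q_i})$ with distinct $p$-power exponents $q_i$, and classifying these (the paper's Lemma~\ref{lem:structure} and Tables~\ref{tab:A2_cases}--\ref{tab:G2_cases}) and exhibiting an $H$-fixed, non-$G$-fixed vector for each is the technical core of the proof. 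The adjoint representation does not suffice there: when $T_H$ acts on the relevant weight vectors with weights involving distinct $p$-powers, producing a $T_H$-weight-$0$, $U_H$-fixed vector requires modules such as $S^{a}(V)\otimes S^{b}(V)$ with $a,b$ depending on the $q_i$ (see the paper's Case 7 for $G_2$), and no single small module works uniformly. As it stands your argument proves nonexistence only when $U_H$ is a root subgroup, so the lower bound is not established.
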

In general, for arbitrary rank, we expect that there are no $2$-dimensional closed epimorphic subgroups in simple algebraic groups. The existence of $3$-dimensional such subgroups is studied in \cite{DonnaAdam}.

For group schemes of finite type over a field, Brion \cite{Brion2017} gives a characterization of epimorphic subgroups and generalizes some of the criteria in \cite{BB1} for subgroups to be epimorphic.

In the context of Hopf algebras the analogous problem is that of non-injective monomorphisms. Non-surjective epimorphisms and non-injective monomorphisms of Hopf algebras are studied in \cite{Chirvasitu} and \cite{Agore}.


\bigskip

In \S\ref{sec:preliminaries} we recall known facts on epimorphic subgroups and fix the notation needed in the rest of the paper. In particular, in Corollary~\ref{cor:up_to_isogeny} we show that one may restrict to the consideration of simply connected groups $G$. Then, in Proposition~\ref{prop:non-existence}, for the groups $\SL_3(k)$, $\Sp_4(k)$ and $G_2(k)$ we establish the nonexistence of closed connected epimorphic subgroups of dimension at most $2$. For the existence statement, we exhibit such groups of dimension $3$ in Section~\ref{sec:existence}. Our calulations use the structure constants available in \textsc{GAP} \cite{GAP}. All calculations were checked both by hand and with \textsc{GAP}.

\section{Preliminaries}
\label{sec:preliminaries}

Throughout, $G$ denotes a linear algebraic group over an algebraically closed field $k$ of characteristic $p\geq 0$.

\subsection{Criteria for epimorphic subgroups}
\label{subsec:criteria}
For a closed subgroup $H$ of $G$, the algebra $k[G]^{H}$ of regular functions on $G$ which are invariant under the right action of $H$ on $G$ identifies with the algebra $k[G/H]$ of regular functions on $G/H$. Bien and Borel established the following theorem.

\begin{thm}[{\cite[Th\'eor\`eme 1]{BB1}}]
  \label{thm:epi_characterization}
  Let $G$ be a connected linear algebraic group and let $H$ be a closed subgroup of $G$. The following conditions are equivalent:
  \begin{enumerate}[{\rm (1)}]
  \item\label{item:BB1} The subgroup  $H$ is epimorphic in $G$.
  \item\label{item:BB2} We have $k[G/H]=k$.
  \item\label{item:BB3} The $k$-vector space $k[G/H]$ has finite dimension.
  \item\label{item:BB4} The fixed points of $H$ in any rational $G$-module coincide with the fixed points of $G$.
  \item\label{item:BB5} Any direct sum decomposition of a rational $G$-module $V$ as $H$-module is a direct sum decomposition of $V$ as $G$-module.
    \end{enumerate}
\end{thm}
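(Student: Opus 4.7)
The plan is to establish the cycle $(\ref{item:BB1}) \Rightarrow (\ref{item:BB4}) \Rightarrow (\ref{item:BB2}) \Rightarrow (\ref{item:BB3}) \Rightarrow (\ref{item:BB1})$, together with the side loop $(\ref{item:BB4}) \Rightarrow (\ref{item:BB5}) \Rightarrow (\ref{item:BB1})$. A pervasive tool will be the standard fact that every rational $G$-module is a directed union of finite-dimensional rational $G$-submodules, which reduces most implications to the case of finite-dimensional $V$.

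For $(\ref{item:BB1}) \Rightarrow (\ref{item:BB4})$, take a finite-dimensional rational $G$-module $V$ and $v \in V^H$, and form the linear algebraic group $K = G \ltimes V$, with $V$ viewed as an additive algebraic group on which $G$ acts through the given representation. Define homomorphisms $\phi_1, \phi_2 \colon G \to K$ by $\phi_1(g) = (g, 0)$ and $\phi_2(g) = (g, gv - v)$; the fact that $\phi_2$ is a group homomorphism is precisely the $1$-cocycle identity for $g \mapsto gv - v$, and the equality $\phi_1|_H = \phi_2|_H$ is exactly the condition $v \in V^H$. Epimorphicity of $H$ then forces $\phi_1 = \phi_2$, i.e.\ $gv = v$ for every $g \in G$. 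Next, $(\ref{item:BB4}) \Rightarrow (\ref{item:BB2})$ is obtained by applying $(\ref{item:BB4})$ to $V = k[G]$ under right translation, using $k[G]^H = k[G/H]$ and $k[G]^G = k$, and $(\ref{item:BB2}) \Rightarrow (\ref{item:BB3})$ is trivial.

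For $(\ref{item:BB3}) \Rightarrow (\ref{item:BB1})$, given $\psi_1, \psi_2 \colon G \to G'$ agreeing on $H$, embed $G'$ in some $\GL(W)$ and consider $f \colon G \to \GL(W)$, $f(g) = \psi_1(g)\psi_2(g)^{-1}$. Since $f(gh) = f(g)$ for all $h \in H$, the comorphism $f^*$ has image in $k[G]^H = k[G/H]$, which being finite-dimensional forces $f(G)$ to be a finite constructible subset of $\GL(W)$; connectedness of $G$ then collapses it to the singleton $\{f(e)\} = \{1\}$, so $\psi_1 = \psi_2$. For $(\ref{item:BB5}) \Rightarrow (\ref{item:BB1})$, embed $G' \subseteq \GL(W)$ and equip $W \oplus W$ with the $G$-action $g \cdot (w_1, w_2) = (\psi_1(g)w_1, \psi_2(g)w_2)$; on $H$ this reduces to the diagonal action, yielding the $H$-module decomposition $W \oplus W = (W \oplus 0) \oplus \{(w,w) : w \in W\}$. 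Condition $(\ref{item:BB5})$ then makes the diagonal $G$-stable, which forces $\psi_1(g)w = \psi_2(g)w$ for all $g, w$.

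For $(\ref{item:BB4}) \Rightarrow (\ref{item:BB5})$, given $V = W_1 \oplus W_2$ as $H$-module, the projection $\pi$ onto $W_1$ along $W_2$ lies in $\operatorname{End}_k(V)^H$; when $V$ is finite-dimensional, $\operatorname{End}_k(V)$ is a rational $G$-module under conjugation, so $(\ref{item:BB4})$ places $\pi$ in $\operatorname{End}_k(V)^G$, meaning $W_1$ and $W_2$ are $G$-stable. The main technical obstacle is the passage to infinite-dimensional $V$: a general finite-dimensional $G$-submodule $V_0 \subseteq V$ need not be $\pi$-stable, so $\operatorname{End}_k(V)$ is not itself a rational $G$-module. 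The remedy is to construct, for any prescribed $v \in V$, a $\pi$-stable rational $G$-submodule $V' \subseteq V$ containing $v$ by iteratively enlarging finite-dimensional $G$-submodules to contain $\pi$-images and $G$-saturations thereof; one then verifies $V' = (V' \cap W_1) \oplus (V' \cap W_2)$ and reduces back, through a further directed-limit argument inside $V'$, to the finite-dimensional case handled above.
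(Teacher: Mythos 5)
The paper offers no proof of this statement to compare against: it is quoted directly from Bien--Borel \cite{BB1}, so your proposal must be judged on its own. Most of it is correct and reconstructs the standard arguments. The semidirect-product cocycle trick for (1)$\Rightarrow$(4) is exactly right (with the reduction to finite-dimensional $V$ by local finiteness), as is right translation on $k[G]$ for (4)$\Rightarrow$(2). Your (3)$\Rightarrow$(1) is valid: the matrix coefficients of $f(g)=\psi_1(g)\psi_2(g)^{-1}$ generate a subalgebra of $k[G]^{H}=k[G/H]$ (since $k[G/H]$ is closed under products), so the closure of $f(G)$ in the matrix space $\operatorname{End}(W)$ has finite-dimensional coordinate ring, hence is finite, and connectedness of $G$ collapses $f(G)$ to $\{f(e)\}=\{1\}$. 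The diagonal-submodule trick for (5)$\Rightarrow$(1) is the classical argument going back to Bergman's Lie-algebra manuscript, and (4)$\Rightarrow$(5) is fine for finite-dimensional $V$.

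The one genuine gap is the infinite-dimensional case of (4)$\Rightarrow$(5), precisely the step you flagged and then waved through. Your construction of a $\pi$-stable $G$-submodule $V'=\bigcup_n V_n$ with $\pi(V_n)\subseteq V_{n+1}$ is fine, but the announced ``further directed-limit argument inside $V'$ reducing to the finite-dimensional case'' does not exist as described: the finite-dimensional $G$-submodules of $V'$ are still not $\pi$-stable, so none of them inherits an $H$-module decomposition to which your $\operatorname{End}_k$ argument applies; you have merely replaced $V$ by a smaller module presenting exactly the same difficulty (indeed $V$ itself was already $\pi$-stable, so $V'$ buys nothing). The repair is short and makes the whole detour unnecessary: for any finite-dimensional $G$-submodule $V_0\subseteq V$, the space $\Hom(V_0,V)\cong V_0^{*}\otimes V$ is a rational $G$-module (a directed union of the finite-dimensional rational modules $V_0^{*}\otimes V_{\beta}$, with $V_{\beta}$ running over finite-dimensional $G$-submodules of $V$), and $\pi|_{V_0}$ is an $H$-fixed vector in it, because $V_0$ is $H$-stable and $\pi$ is $H$-equivariant. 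Condition (4) then gives $g\pi(g^{-1}x)=\pi(x)$ for all $g\in G$ and $x\in V_0$; letting $V_0$ range over all finite-dimensional $G$-submodules shows $\pi$ is $G$-equivariant on all of $V$, so $W_1=\operatorname{im}\pi$ and $W_2=\ker\pi$ are $G$-submodules. With that substitution (which also subsumes your finite-dimensional conjugation argument), the proof is complete.
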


As a consequence of the above theorem and of the definition, for a subgroup $H$ of $G$ we have the following criteria for epimorphicity (see \cite[\S2]{BB1}).

\begin{enumerate}[{\rm (1)}]
  \setcounter{enumi}{5}
  \item\label{item:BB6} For subgroups $H\subseteq L\subseteq G$, if $H$ is epimorphic in $L$ and $L$ is epimorphic in $G$ then $H$ is epimorphic in $G$.
  \item\label{item:BB7} The subgroup $H$ is epimorphic in $G$ if and only if the connected component of the identity $H^{\circ}$ or a Borel subgroup of $H^{\circ}$ is epimorphic in $G$.
  \item\label{item:BB8} If $G$ is simple, a proper subgroup $H$ is epimorphic if and only if the radical of $H^{\circ}$ is epimorphic in $G$.
  \item\label{item:BB9} If $(G_{i})_{i\in I}$ is a family of closed subgroups which generate $G$ and if $H\cap G_{i}$ is epimorphic in $G_i$ for each $i\in I$ then $H$ is epimorphic in $G$.
\end{enumerate}

The characterization in Theorem~\ref{thm:epi_characterization} shows that if $H$ is epimorphic in $G$ then $G/H$ is close to being projective. At the other extreme, the following result, established independently by Richardson and by Cline, Parshall and Scott, provides a criterion for determining when $G/H$ is affine.

\begin{thm}[{\cite[Theorem A]{Richardson77} and \cite[Corollary 4.5]{CPS77}}]
  \label{thm:aff_characterization}
  Let $H$ be a closed subgroup of the reductive group $G$.
  Then $G/H$ is an affine variety if and only if $H$ is reductive.
\end{thm}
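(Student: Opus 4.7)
The plan is to prove the two directions of the equivalence separately, since they require quite different techniques.

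\emph{Reductive implies affine.} I would begin with Chevalley's theorem on subgroup stabilizers: there exists a rational representation $\rho: G \to \GL(V)$ and a line $L = \langle v \rangle$ in $V$ whose stabilizer in $G$ is exactly $H$. Since $H$ is reductive, the one-dimensional character by which $H$ acts on $L$ can be absorbed by a standard tensor construction (passing from $V$ to $V \otimes L^{*}$ or a suitable symmetric/exterior power), so that we obtain a pair $(V', v')$ in which $H$ is the stabilizer of the vector $v'$ itself. The orbit map then identifies $G/H$ with the orbit $G \cdot v' \subset V'$ as a $G$-variety. The decisive step is to show that this orbit is closed in $V'$. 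Reductivity of the stabilizer can be combined with the Hilbert--Mumford criterion (or Kempf's instability theory) to rule out degenerations of $v'$ under one-parameter subgroups, forcing the orbit to be closed. A closed subvariety of the affine space $V'$ is affine, so $G/H$ is affine.

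\emph{Affine implies reductive.} For the converse I would argue by contradiction: suppose $U := R_{u}(H)$ is nontrivial. Factor the quotient as $G \to G/U \to G/H$, where the second map is a fibration with fiber the reductive group $H/U$. By the forward direction already proved, $H/U$ is affine, so the fibration $G/U \to G/H$ has affine base and affine fibers. The goal is then to produce a contradiction from the presence of the nontrivial unipotent normal subgroup $U$ of $H$. A cohomological route is natural here: use Serre's criterion together with the Leray spectral sequence for $G/U \to G/H$ (or equivalently with the algebraic analogue, computing $R^{i}\mathrm{ind}_{H}^{G}$ via $H/U$ and then $U$) to show that affineness of $G/H$ forces a vanishing statement of the form $H^{i}(U, k[G]) = 0$ for $i > 0$ which, combined with the free right action of $U$ on $G$ and the structure of rational $U$-cohomology, is incompatible with $U \neq 1$. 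Alternatively one can exploit the $\Lie(U)$-action on $k[G/U]^{H/U} = k[G]^{H} = k[G/H]$ and derive a contradiction from the fact that locally nilpotent derivations on a finitely generated $k$-algebra cannot simultaneously be compatible with a free unipotent group action of the requisite type.

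\emph{Main obstacle.} The harder half is the converse, since the forward direction reduces to well-developed orbit-closure machinery, whereas showing that non-reductive $H$ obstructs affineness of $G/H$ is delicate in positive characteristic: the classical Matsushima and Hochschild--Mostow arguments (via maximal compact subgroups or Levi decompositions of real Lie groups) are unavailable, and one is forced through a genuinely scheme-theoretic/cohomological route. Controlling the interaction of the unipotent radical with Serre's vanishing criterion is precisely the technical core of the independent proofs by Richardson and by Cline--Parshall--Scott, and it is the step where I would expect to invest the most care.
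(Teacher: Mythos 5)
The paper does not prove this theorem at all --- it is imported verbatim from Richardson and from Cline--Parshall--Scott --- so your proposal has to be measured against those arguments, and the forward direction of your sketch contains a genuine gap. The step ``reductivity of the stabilizer, combined with Hilbert--Mumford or Kempf's instability theory, rules out degenerations and forces the orbit $G\cdot v'$ to be closed'' is false. Take $G=\SL_2(k)$ acting on binary cubics $S^3(k^2)$ and $v'=x^2y$: a stabilizing element must fix each of the two distinct zeros of $v'$ together with their multiplicities, hence is diagonal, and $\mathrm{diag}(t,t^{-1})\cdot x^2y=t^{-1}x^2y$ forces $t=1$; so the stabilizer is trivial, hence reductive, yet the same computation shows $0\in\overline{G\cdot v'}$, so the orbit is not closed. (It is nevertheless affine, being isomorphic to $\SL_2(k)$, which shows closedness of a Chevalley orbit is not even the right invariant to chase.) Kempf's theory cannot repair this: for a non-closed orbit it produces an optimal destabilizing parabolic containing the stabilizer, and reductive subgroups happily sit inside proper parabolics (Levi subgroups, or the trivial group), so no contradiction arises. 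The actual proof of this direction (Richardson's) runs through invariant theory rather than orbit closures: Haboush's theorem makes $H$ geometrically reductive, Nagata's theorem then gives that $k[G]^{H}$ is a finitely generated $k$-algebra, and since the right $H$-cosets are closed $H$-orbits in $G$ which invariants separate, $G\to\operatorname{Spec}k[G]^{H}$ is a geometric quotient and $G/H$ is affine. This dependence on Haboush's theorem --- the very reason the positive-characteristic statement only dates from the mid-1970s --- never surfaces in your sketch.

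Your converse is aimed in the right direction: it is essentially the Cline--Parshall--Scott route, where Serre's criterion converts affineness of $G/H$ into vanishing of $R^i\operatorname{ind}_H^G$ for $i>0$, identified with $H^i(H,k[G]\otimes M)$. But the decisive incompatibility is asserted, not argued: you need that $H^i(U,k[G])=0$ for all $i>0$ (equivalently, by Serre again, affineness of $G/U$) forces $U=1$, and the sketch supplies no mechanism for this; your alternative via locally nilpotent derivations is not a principle one can invoke, since free $\GG_a(k)$-actions on affine varieties with finitely generated coordinate rings exist in abundance (translation on $k\times X$, say), so ``free unipotent action on an affine variety'' carries no intrinsic contradiction. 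Two smaller repairs: from $G/H$ affine one gets $G/U$ affine because $G/U\to G/H$ is an \emph{affine morphism} (a locally trivial fibration with affine fibre $H/U$) --- ``affine base and affine fibers,'' as you write it, does not by itself make the total space affine; and the appeal to the forward direction to see that $H/U$ is affine is superfluous, since every linear algebraic group is an affine variety. One concrete way to finish, modulo the care needed in small characteristic to place a chosen $\GG_a(k)\subseteq R_u(H)$ inside an $A_1$-subgroup $A$: the fibre of $G/\GG_a(k)\to G/A$ over the base point is the closed subvariety $A/\GG_a(k)\cong k^2\setminus\{0\}$, which is not affine, contradicting affineness of $G/\GG_a(k)$. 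As you correctly anticipate, this half is where the real content lies; but as written, both halves of your proposal rest at their critical moments on claims that are either false (orbit closedness from reductive stabilizer) or unestablished (the cohomological obstruction to $U\neq 1$).
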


\begin{cor}
  \label{cor:reductive_proper}
  Let $H$ be a closed subgroup of the reductive group $G$.
  If $H$ is reductive and $\dim(H)\neq\dim (G)$ then
  no closed subgroup of $H$ is epimorphic in $G$.
\end{cor}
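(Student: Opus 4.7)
The plan is to combine the two cited theorems via a straightforward comparison of invariant rings. First I would observe that, since $H$ is reductive and closed in the reductive group $G$, Theorem~\ref{thm:aff_characterization} implies that the homogeneous space $G/H$ is affine. The hypothesis $\dim(H)\neq\dim(G)$ together with $H\subseteq G$ forces $\dim(G/H)=\dim(G)-\dim(H)>0$, so $G/H$ is an affine variety of positive dimension. Any such variety has coordinate ring of positive Krull dimension over $k$, and hence $k[G/H]=k[G]^{H}$ is infinite-dimensional as a $k$-vector space.

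Next, for an arbitrary closed subgroup $K\subseteq H$, every right-$H$-invariant regular function on $G$ is automatically right-$K$-invariant, giving the inclusion of $k$-algebras
$$
k[G/H]=k[G]^{H}\subseteq k[G]^{K}=k[G/K].
$$
Therefore $k[G/K]$ is also infinite-dimensional over $k$. By the equivalence of conditions (\ref{item:BB1}) and (\ref{item:BB3}) of Theorem~\ref{thm:epi_characterization}, this rules out epimorphicity of $K$ in $G$, which is precisely the claim.

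There is no real obstacle here beyond unpacking the two cited theorems. The one mild caveat is that Theorem~\ref{thm:epi_characterization} is stated for connected $G$; if one wants to allow disconnected reductive $G$, one passes to $G^{\circ}$ and applies the same dimension-and-invariants argument there, using criterion~(\ref{item:BB7}) and the fact that the identity component of any closed subgroup of $H$ is contained in $H^{\circ}\subseteq G^{\circ}$.
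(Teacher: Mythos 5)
Your proof is correct and follows essentially the same route as the paper: Theorem~\ref{thm:aff_characterization} gives that $G/H$ is affine of positive dimension, hence $k[G/H]$ is infinite-dimensional, and the inclusion $k[G]^{H}\subseteq k[G]^{K}$ transfers this to any closed subgroup $K\subseteq H$, contradicting Theorem~\ref{thm:epi_characterization}(\ref{item:BB3}). The only difference is that you spell out the positive-Krull-dimension step and the connectedness caveat, which the paper leaves implicit.
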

\begin{proof}
  By Theorem~\ref{thm:aff_characterization} and the assumption that $\dim(H)\neq\dim (G)$ the dimension of $k[G/H]$ is not finite. Thus $H$ is not epimorphic in $G$ by Theorem~\ref{thm:epi_characterization}.(3). If $M$ is a closed subgroup of $H$ then
  $$
  k[G/M]\cong k[G]^{M}\supseteq k[G]^{H}\cong k[G/H],
  $$
  hence $M$ cannot be epimorphic in $G$.
  \end{proof}

The following consequence of Theorem~\ref{thm:aff_characterization} was observed by Bien and Borel in {\cite{BB1}. The proof follows from the fact that for $k=\CC$ any two-dimensional, non-abelian, non-unipotent closed connected subgroup of a simple algebraic group over $k$, lies in an $A_1$-type subgroup.

\begin{cor}
  \label{cor:char_0}
  There are no two-dimensional closed epimorphic subgroups in a simple algebraic group of rank at least $2$ defined over $\CC$.
\end{cor}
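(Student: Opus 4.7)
The plan is to assume the existence of a closed $2$-dimensional epimorphic subgroup $H$ of $G$, where $G$ is taken to be simple of rank at least $2$ (a Borel subgroup of $\SL_2$ is $2$-dimensional and epimorphic, so the statement implicitly requires this restriction), and for each possible structure of $H$ to exhibit a reductive proper closed subgroup of $G$ containing $H$; Corollary~\ref{cor:reductive_proper} then forces $H$ to not be epimorphic, a contradiction. By criterion \ref{item:BB7} I may replace $H$ with $H^{\circ}$ and assume $H$ is connected; being of dimension $2$, it is automatically solvable and decomposes as $H = T \ltimes U$ with $T$ a maximal torus of $H$ and $U$ its unipotent radical.

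I would then split into cases by the structure of $(T,U)$. If $H$ is a $2$-dimensional torus, then $H$ is itself reductive and proper in $G$, so Corollary~\ref{cor:reductive_proper} applies directly. If $H = T \times U$ is abelian with $\dim T = \dim U = 1$, then $H$ is contained in the Levi subgroup $C_G(T)$, which is reductive and proper in $G$ (the centre of a simple group is finite, so a $1$-dimensional subtorus cannot be central). If $H$ is unipotent, Engel's theorem provides a non-zero $H$-fixed vector in the adjoint module $\Lie G$, whereas $(\Lie G)^{G} = 0$ in characteristic zero, contradicting criterion \ref{item:BB4}.

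The main case, and the one that genuinely uses $k = \CC$, is $H = T \ltimes U$ non-abelian. Writing $\Lie H = \langle h, e \rangle$ with $h \in \Lie T$ semisimple, $e \in \Lie U$ nilpotent, and $[h,e] = \alpha e$ for some $\alpha \neq 0$, I would rescale $h$ within $\Lie T$ so that $\alpha = 2$, then invoke the Jacobson--Morozov theorem to produce $f \in \Lie G$ making $(e, h, f)$ an $\mathfrak{sl}_2$-triple. Integrating yields a connected $A_1$-type subgroup $A \leq G$ with $\Lie A = \langle e, h, f \rangle \supseteq \Lie H$, so $H \leq A$; and $A$ is reductive, $3$-dimensional, and proper (since $\dim G \geq 8$), so Corollary~\ref{cor:reductive_proper} completes the proof.

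The hard part is precisely this last step: the Jacobson--Morozov completion, and more generally the existence of a reductive overgroup for a non-abelian $2$-dimensional solvable subgroup, is what breaks down in small positive characteristic; this is the main reason the rest of the paper must develop a separate, case-by-case analysis of the situation in characteristic $p > 0$.
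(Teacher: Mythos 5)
Your overall strategy is the same as the paper's: reduce to a connected solvable $H$, dispose of the torus, unipotent and abelian cases (the paper does this in Lemma~\ref{lem:mixed_2_dim}), and in the remaining non-abelian case place $H$ inside a proper reductive subgroup so that Corollary~\ref{cor:reductive_proper} applies; the paper itself settles that last case by a one-sentence appeal to the Bien--Borel observation that such an $H$ lies in an $A_1$-type subgroup. Your remark that a rank restriction is implicitly needed is correct: a Borel subgroup of $\SL_2(\CC)$ is a two-dimensional epimorphic subgroup.

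The genuine gap is in your main case. The Jacobson--Morozov theorem produces an $\mathfrak{sl}_2$-triple $(e,h_0,f_0)$ through $e$ for \emph{some} $h_0$; it does not let you complete the prescribed pair $(e,h)$ to a triple. The refinement that does this (Morozov's lemma) requires in addition that $h\in[\Lie G,e]$, and this can fail. Concretely, take $G=\SL_3(\CC)$, $U=U_{\alpha_1}$ with $e=E_{12}$, and $T=\{\operatorname{diag}(\lambda,1,\lambda^{-1})\}$, so that after your rescaling $h=\operatorname{diag}(2,0,-2)$. The only diagonal matrices in $[\Lie G,e]$ are the multiples of $E_{11}-E_{22}$, and any $f$ with $[h,f]=-2f$ lies in $\langle E_{21},E_{32}\rangle$, whence $[e,f]\in\langle E_{11}-E_{22}\rangle$ can never equal a nonzero multiple of $h$: no $f$ completes $(e,h)$, and in fact this $H=TU$ lies in no $A_1$-type subgroup at all. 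So your argument proves a false intermediate claim (and the paper's one-line sketch, taken literally, has the same issue). The conclusion of the case survives: here $H$ lies in the proper reductive Levi subgroup $\langle T_{\max}, U_{\pm\alpha_1}\rangle$ --- exactly the detour the paper takes for root subgroups in Lemma~\ref{lem:root_group_case} --- and in general one must embed $H$ into $A_0\cdot C_G(A_0)^{\circ}$ for a Jacobson--Morozov $A_1$-subgroup $A_0$ through $U$, after conjugating $T$ by an element of $C_G(e)$ into a suitable maximal torus of $N_G(\langle e\rangle)^{\circ}$. That step needs the structure of $N_G(\langle e\rangle)$, not just the existence of a triple through $e$, so your proof of the key case is incomplete as written.
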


In the rest of the paper we assume that the characteristic of the ground field is positive, i.e. $p>0$.

\subsection{Isogenies of $G$}

The proof of part (2) of the following lemma was communicated to us by Michel Brion.

\begin{prop}
  \label{lem:up_to_bij_hom}
  Let $\phi:G_1\rightarrow G_2$ be a surjective homomorphism of algebraic groups. 
  \begin{enumerate}[{\rm (1)}]
  \item If $H_1$ is a closed epimorphic subgroup of $G_1$ then $\phi(H_1)$ is epimorphic in $G_2$.
  \item If $H_2$ is a closed epimorphic subgroup of $G_2$ then $\phi^{-1}(H_2)$ is epimorphic in $G_1$.
  \end{enumerate}
\end{prop}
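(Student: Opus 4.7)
The plan is to apply Theorem~\ref{thm:epi_characterization} in both parts; I find it cleanest to use criterion (2) for part (1) and criterion (4) for part (2).

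For part (1), the subgroup $\phi(H_1)$ is closed in $G_2$, since a homomorphism of algebraic groups has closed image. To verify criterion (2), I must show $k[G_2]^{\phi(H_1)}=k$, with the action by right translation. Since $\phi$ is surjective it is dominant, so the comorphism $\phi^{*}:k[G_2]\to k[G_1]$ is injective. A short direct check using $\phi(gh)=\phi(g)\phi(h)$ for $h\in H_1$ shows that $\phi^{*}$ restricts to an injection $k[G_2]^{\phi(H_1)}\hookrightarrow k[G_1]^{H_1}$. By the hypothesis that $H_1$ is epimorphic in $G_1$, the target equals $k$, so the source also equals $k$, and $\phi(H_1)$ is epimorphic in $G_2$.

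For part (2), $\phi^{-1}(H_2)$ is closed as the preimage of a closed subset. Using criterion (4), I let $V$ be an arbitrary rational $G_1$-module and pick $v\in V^{\phi^{-1}(H_2)}$; the goal is $v\in V^{G_1}$. Setting $N=\ker(\phi)$, the inclusion $N\subseteq\phi^{-1}(H_2)$ gives $v\in V^{N}$. Since $N$ is normal in $G_1$, the subspace $V^{N}$ is a $G_1$-submodule on which $N$ acts trivially, and the $G_1$-action on $V^{N}$ descends to a rational action of $G_2$. Under this descent the action of $\phi^{-1}(H_2)$ on $V^{N}$ coincides with that of $H_2=\phi(\phi^{-1}(H_2))$, so $v$ is an $H_2$-fixed vector in the rational $G_2$-module $V^{N}$. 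Applying criterion (4) to the epimorphic subgroup $H_2$ of $G_2$ yields $v\in (V^{N})^{G_2}=V^{G_1}$, which finishes part (2).

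The main obstacle is the descent step in part (2): one must justify that a rational $G_1$-representation on which $\ker(\phi)$ acts trivially carries a rational structure of $G_2$-module. This is immediate when $\phi$ is separable, since then $G_2$ is the geometric quotient $G_1/N$ and the universal property of the quotient applies directly. In the inseparable case one instead has to work with the scheme-theoretic kernel of $\phi$ and exploit faithful flatness of $\phi$, which is presumably where Brion's input enters.
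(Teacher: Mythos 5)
Your part (1) is correct; it is essentially the paper's own argument transposed from criterion (4) of Theorem~\ref{thm:epi_characterization} to criterion (2), and either way it is a one-line verification.

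Part (2) has a genuine gap at exactly the point you flag, and flagging it does not repair it. The claim that a rational $G_1$-module on which the (reduced) kernel $N=\ker(\phi)$ acts trivially descends to a rational $G_2$-module is simply false when $\phi$ is inseparable, and inseparable $\phi$ is precisely the case this paper needs (exceptional isogenies, and isogenies such as $\SL_p\to\mathrm{PGL}_p$ whose kernel is infinitesimal). For instance, for $\phi:\GG_m\to\GG_m$, $\lambda\mapsto\lambda^{p}$, the reduced kernel is trivial, so $V^{N}=V$ for every rational module $V$, yet the weight-$1$ character does not factor through $\phi$. Your proposed repair --- passing to the scheme-theoretic kernel and using faithfully flat descent --- does not obviously close the gap either, because your vector $v$ is only known to be fixed by the reduced subgroup $\phi^{-1}(H_2)$, not by the scheme-theoretic preimage, and $V^{N_{\mathrm{red}}}$ can be strictly larger than the fixed-point space of the infinitesimal kernel. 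The paper avoids representation-theoretic descent altogether: writing $H_1=\phi^{-1}(H_2)$, the map $\phi$ induces a bijective morphism $G_1/H_1\to G_2/H_2$, so $k(G_1/H_1)$ is a finite purely inseparable extension of $k(G_2/H_2)$; hence there is a $p$-power $q$ with $f^{q}\in k(G_2/H_2)$ for every $f\in k[G_1/H_1]$, and normality of $G_2/H_2$ forces $f^{q}\in k[G_2/H_2]=k$, whence $f\in k$ and $k[G_1/H_1]=k$. You would need either this function-field argument or a correct descent statement for the scheme-theoretic preimage to complete part (2).
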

\begin{proof}
  Let $H_2$ denote the image $\phi(H_1)$ of the closed epimorphic subgroup $H_1$ of $G$. Any rational $G_2$-module $V$ is a rational $G_1$-module via $\phi$, thus $V^{H_2}=V^{H_1}=V^{G_1}=V^{G_2}$. Hence $H_2$ is epimorphic in $G_2$.
  
  Suppose now that $H_2$ is a closed epimorphic subgroup in $G_2$ and denote $\phi^{-1}(H_2)$ by $H_1$. The map $\phi$ induces a bijective morphism
  of varieties $G_1/H_1\rightarrow G_2/H_2$. Thus, the function field $k(G_1/H_1)$ is a purely inseparable finite extension of $k(G_2/H_2)$. Hence, since $p>0$, there exists a $p$-power $q$ such that $k(G_1/H_1)^q$ lies in $k(G_2/H_2)$. In particular $k[G_1/H_1]^q$ lies in $k(G_2/H_2)$. Since $G_2/H_2$ is a normal variety (see \cite[Lemma 5.3.4]{Springer}), it follows that $k[G_1/H_1]^q$ lies in $k[G_2/H_2]=k$. Hence, $k[G_1/H_1]=k$.
\end{proof}

In what follows, we refer to the isomorphisms of abstract groups described in \cite[Theorem 28]{Steinberg} as \emph{exceptional isogenies}. These are non-separable bijective endomorphisms of simple algebraic groups and occur for groups of type $B_2$ and $F_4$  when $p=2$, and for groups of type $G_2$ when $p=3$.


\begin{cor}
  \label{cor:up_to_isogeny}
  Let $G$ be a simple algebraic group and let $H$ be a closed subgroup of $G$.
  \begin{enumerate}[{\rm (1)}]
  \item For any isogeny $\phi : G \to G'$, the subgroup $H$ is epimorphic in G if and only if $\phi(H)$ is epimorphic in $G'$.

  \item The dimension of a minimal closed epimorphic subgroup of $G$ is independent of the isogeny type of $G$.
    \end{enumerate}
\end{cor}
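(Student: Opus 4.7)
The plan is to derive both parts directly from Proposition~\ref{lem:up_to_bij_hom}, using the fact that an isogeny is a surjective homomorphism with finite kernel.

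For part (1), the forward implication is immediate from Proposition~\ref{lem:up_to_bij_hom}(1). For the converse, suppose $\phi(H)$ is epimorphic in $G'$. Then Proposition~\ref{lem:up_to_bij_hom}(2) yields that the preimage $\phi^{-1}(\phi(H)) = H\cdot\ker(\phi)$ is epimorphic in $G$. Because $\ker(\phi)$ is finite, $H$ has finite index in this preimage, so their identity components agree: $(H\cdot\ker(\phi))^\circ = H^\circ$. Applying criterion (\ref{item:BB7}) in both directions---first to conclude that $H^\circ$ is epimorphic in $G$, then to lift back to $H$ itself---gives the claim.

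For part (2), I would fix a simply connected representative $\tilde{G}$ of the isogeny class, together with an isogeny $\phi : \tilde{G} \to G'$ for each isogenous $G'$. Since $\ker(\phi)$ is finite, the maps $\tilde{H}\mapsto\phi(\tilde{H})$ and $H'\mapsto\phi^{-1}(H')$ preserve dimension: $\dim\phi(\tilde{H})=\dim\tilde{H}$ and $\dim\phi^{-1}(H')=\dim H'$ for closed subgroups $\tilde{H}\subseteq\tilde{G}$ and $H'\subseteq G'$. Part (1) then supplies a dimension-preserving correspondence between closed epimorphic subgroups of $\tilde{G}$ and those of $G'$, so the minimal dimensions of closed epimorphic subgroups in the two groups agree. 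Comparing any two isogenous simple groups through $\tilde{G}$ yields the conclusion.

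The only mild subtlety lies in the converse direction of part (1), where $\phi^{-1}(\phi(H))$ may strictly contain $H$; this is resolved by the identity-component criterion (\ref{item:BB7}). Beyond that, the argument is a straightforward application of Proposition~\ref{lem:up_to_bij_hom}, and I anticipate no further obstacles.
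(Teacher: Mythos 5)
Your proof is correct and follows the route the paper intends: the paper states this corollary without proof, leaving it as a direct consequence of Proposition~\ref{lem:up_to_bij_hom}, which is exactly the derivation you give. Your handling of the one genuine subtlety---that $\phi^{-1}(\phi(H))=H\cdot\ker(\phi)$ may strictly contain $H$, resolved via finiteness of the kernel and criterion~(\ref{item:BB7})---is the right way to close that gap.
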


\subsection{Root system}
\label{subsec:root_system}
Throughout we fix a maximal torus $T$ and a Borel subgroup $B$ of $G$, with $T\subseteq B$, and denote by $U$ the unipotent radical of $B$. The root system $\Phi$ is with respect to $T$ and the positive roots and the simple roots are with respect to $U$. For $\SL_3(k)$ the positive roots are $\alpha_1,\alpha_2,\alpha_3=\alpha_1+\alpha_2$, for $\Sp_4(k)$ the positive roots are $\alpha_1,\alpha_2,\alpha_3=\alpha_1+\alpha_2,\alpha_4=\alpha_1+2\alpha_2$ and for $G_2(k)$ they are
$$
\alpha_1,
\alpha_2,
\alpha_3=\alpha_2+\alpha_1,
\alpha_4=\alpha_2+2\alpha_1,
\alpha_5=\alpha_2+3\alpha_1,
\alpha_6=2\alpha_2+3\alpha_1.
$$
The set of coroots is denoted by $\Phi^{\vee}$ and $\alpha_{i}^{\vee}:\GG_m(k)\rightarrow T$ is the simple coroot corresponding to the simple root $\alpha_i$. Furthermore, we write $\omega_i$ for the fundamental dominant weight corresponding to the simple root $\alpha_i$, for $i=1,2$, and $s_\alpha$ for the reflection in the Weyl group $N_G(T)/T$ associated to the root $\alpha$. In addition, for groups of type $A_1$, we will identify the set of dominant weights with the set of nonnegative integers.

\section{Nonexistence of $2$-dimensional closed epimorphic subgroups}
\label{sec:non-existence}

In this section we show that there are no epimorphic subgroups of dimension at most $2$ in simple algebraic groups of rank $2$. By Corollary~\ref{cor:up_to_isogeny}, it suffices to consider simply connected groups. Thus we treat $\SL_3(k)$, $\Sp_4(k)$ and $G_2(k)$.  Moreover, by \S\ref{subsec:criteria}.\eqref{item:BB7}, it suffices to show the nonexistence of closed connected epimorphic subgroups. If the dimension of such a subgroup is at most $2$, it is necessarily solvable. For a closed connected solvable subgroup $H$, we may assume that $H\subseteq B$. Then, the unipotent radical $U_H$ of $H$ lies in $U$ and we may fix a maximal torus $T_H$ of $H$ lying in $T$. Clearly, $H$ is the semidirect product $U_H\rtimes T_H$. 

Some cases can easily be seen not to appear as epimorphic subgroups.

\begin{lem}
  \label{lem:mixed_2_dim}
  Let $H$ be a closed subgroup of a simple algebraic group $G$ with $\dim H\leq 2$. If all elements in $H$ are semisimple or if all elements in $H$ are unipotent or if $H$ is abelian then $H$ is not epimorphic in $G$.
\end{lem}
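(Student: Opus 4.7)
The plan is to handle each of the three cases in the hypothesis separately, in each case reducing to the identity component $H^\circ$ via criterion \eqref{item:BB7} (noting that for a $2$-dimensional solvable group, a Borel of $H^\circ$ equals $H^\circ$ when $H^\circ$ is a torus or is nilpotent, so in all the situations that arise it suffices to show $H^\circ$ is not epimorphic).

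First I would treat the semisimple case. If every element of $H$ is semisimple, then $H^\circ$ is a connected algebraic group consisting of semisimple elements, hence a torus; in particular $H^\circ$ is reductive with $\dim H^\circ \le 2 < \dim G$, so Corollary~\ref{cor:reductive_proper} (applied with the ambient reductive subgroup being $H^\circ$ itself) shows $H^\circ$ is not epimorphic in $G$, and criterion \eqref{item:BB7} finishes the case. For the unipotent case, I would pass to $H^\circ$ (connected unipotent) and invoke criterion \eqref{item:BB4}: choose any nontrivial irreducible rational $G$-module $V$; then $V^G = 0$, but because $H^\circ$ is unipotent and connected it has a nonzero fixed vector on $V$, so $V^{H^\circ} \ne V^G$, contradicting epimorphicity.

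For the abelian case, I would again reduce to $H^\circ$, which is connected commutative and therefore splits as $H^\circ = T_H \times U_H$ with $T_H$ a torus and $U_H$ a unipotent group. If either factor is trivial, one of the previous two cases applies, so assume $\dim T_H = \dim U_H = 1$. Up to conjugation $T_H \subseteq T$, and since $T_H$ has positive dimension while $Z(G)$ is finite, $T_H \not\subseteq Z(G)$, so $L := C_G(T_H)$ is a proper Levi subgroup of $G$, in particular connected and reductive, with $\dim L < \dim G$. Since $T_H$ centralizes $U_H$ we have $U_H \subseteq L$, hence $H^\circ \subseteq L$, and Corollary~\ref{cor:reductive_proper} applied to $L$ gives that no closed subgroup of $L$, in particular $H^\circ$, is epimorphic in $G$. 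One more application of criterion \eqref{item:BB7} completes the proof.

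The argument is essentially bookkeeping around the criteria already set up in \S\ref{subsec:criteria}; the only point that needs attention is the reductive hypothesis in Corollary~\ref{cor:reductive_proper}, which is why in the mixed abelian case one has to produce the ambient Levi $C_G(T_H)$ rather than try to apply the corollary to the non-reductive group $H^\circ$ directly.
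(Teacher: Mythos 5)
Your proof is correct, but it takes a genuinely different route from the paper's in two of the three cases. The paper reduces everything to the abelian situation (a connected group of semisimple elements is a torus; a connected unipotent group of dimension at most $2$ is abelian) and then argues directly with Theorem~\ref{thm:epi_characterization}: a torus acts completely reducibly on every rational $G$-module, violating criterion (5), and when there is a nontrivial unipotent part $H_u$, the line $\Lie(H_u)\subseteq\Lie(G)$ is fixed pointwise by the abelian group $H$ but not by $G$, violating criterion (4). You instead dispose of the torus case by noting that a torus is reductive of dimension smaller than $\dim G$ and applying Corollary~\ref{cor:reductive_proper}, of the unipotent case by the nonzero fixed vector of a connected unipotent group on a nontrivial irreducible $G$-module, and of the mixed abelian case by placing $H^{\circ}$ inside the proper Levi subgroup $C_G(T_H)$ and again invoking Corollary~\ref{cor:reductive_proper}. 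Both arguments are sound. Your Levi argument is exactly the device the paper uses later in Lemma~\ref{lem:root_group_case}, and it sidesteps a point the paper's proof leaves implicit, namely that the nilpotent line $\Lie(H_u)$ cannot lie in $\Lie(G)^G$ (which deserves a word in small characteristic, where $\Lie(G)^G$ may be nonzero); on the other hand, the paper's argument is more uniform and does not lean on the Richardson/Cline--Parshall--Scott affineness theorem for the torus case.
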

\begin{proof}
  By \S\ref{subsec:criteria}.(7), we may assume that $H$ is connected. If $H$ is abelian then either $H$ is a torus, or $H=H_u$ is unipotent or $H$ is $2$-dimensional and $H=H_sH_u$ with $H_s$ a torus and $H_u\cong \GG_a(k)$. In the latter two cases we notice that $H$ fixes $\Lie(H_u)$ but $G$ doesn't, so by Theorem~\ref{thm:epi_characterization}.(4) $H$ is not epimorphic in $G$. If $H$ is a  torus then $H$ acts completely reducibly on every $G$-module, so by
  Theorem~\ref{thm:epi_characterization}.(5) is not epimorphic in $G$.   
\end{proof}

By Lemma~\ref{lem:mixed_2_dim}, for closed subgroups $H$ of simple algebraic groups $G$, it suffices to treat the cases where $H=U_H\rtimes T_H$ is a proper, i.e. non-abelian, semidirect product with $\dim(U_H)=\dim(T_H)=1$. The arguments to follow make use of surjective homomorphisms
$$
t:k^{\times}\rightarrow T_H\subseteq T
\quad\text{and}\quad
u:k\rightarrow U_H\subseteq U.
$$
Here $t\in \Hom(\GG_m(k),T)$ is a cocharacter \cite[II,Ch1,\S1.6]{Jantzen_Reductive}. Thus, if $G$ is simply connected of rank $r$, then $\Hom(\GG_m(k),T)=\ZZ\Phi^{\vee}$ and
$$
t=m_1\alpha_1^{\vee}+m_2\alpha_2^{\vee}+\dots+m_r\alpha_r^{\vee},
$$
a linear combination of simple coroots with $m_1,\dots,m_r\in\ZZ$, i.e. for any $\lambda\in k^{\times}$
$$
t(\lambda)=\alpha_1^{\vee}(\lambda^{m_1})\alpha_2^{\vee}(\lambda^{m_2})\cdots \alpha_r^{\vee}(\lambda^{m_r}).
$$
Describing the map $u$ onto the subgroup $U_H$ is more delicate. Since the ground field is algebraically closed, any $1$-dimensional closed connected unipotent group is isomorphic to $\GG_{a}(k)$. 
Let $N=|\Phi^{+}|$. 
We have an isomorphism between the varieties $k^N$ and $U$, defined by
$$
(x_1,\dots,x_{N})\mapsto\prod_{i=1}^{N}u_{i}(x_i)
$$
for a fixed (but arbitrary) ordering of the positive roots and for fixed isomorphisms onto the root subgroups $u_{i}:k\rightarrow U_{\alpha_i}$. Thus, by \cite[Lemma 3.6]{Hartshorne}, for the morphism of varieties $u:k\rightarrow U$ we have
\begin{equation}
\label{u_parametrization}
u(x)=\prod_{i=1}^{N} u_{i}(P_i(x))
\end{equation}
for polynomials $P_i(x)$. Notice that while the maps $t$ and $u$ may not be isomorphisms, we may assume that our subgroup $H$ lies in the image of some $\mu\circ(t\times u)$ where $\mu$ is the multiplication map in $G$. Moreover, imposing the condition that $\mu\circ(t\times u)$ is a homomorphism of groups we obtain an isogeny onto $H$.

Now, since $U_{H}\cong\GG_a(k)$, there is an integer $m\neq 0$ such that for any $\lambda\in k^{\times}$ and $x\in k$ we have
$$
{}^{t(\lambda)}u(x)=u(\lambda^{m} x).
$$
Hence, for all $\lambda$ and $x$ we have
$$
\prod_i u_i(P_i(\lambda^{m}x))=u(\lambda^{m}x)={}^{t(\lambda)}u(x)=\prod_i u_i(\lambda^{n_i}P_i(x)),
$$
for some integers $n_i$.
Therefore $P_i(\lambda^{m}x)=\lambda^{n_i}P_i(x)$ for each $i$, thus $P_i(x)=c_ix^{q_i}$ and
\begin{equation}
\label{unipotent_poly}
u(x)=\prod_i u_i(c_ix^{q_i})
\end{equation}
for some constants $c_i\in k$ and integers $q_i\geq 0$. In the rest of the paper we focus on groups of rank $2$. Henceforth we assume that
$$G\text{ is simple of rank }2.$$

By the above, the map $u$ is determined by $c_1,\dots,c_{N}\in k$ and by the integers $q_1,\dots,q_{N}\geq 0$, while the map $t$ is determined by $m_1,m_2\in\ZZ$. Imposing the condition that $\mu\circ(t\times u)$ is a homomorphism of groups translates into conditions on these parameters. In Lemma~\ref{lem:structure} we pin down the possible values for these parameters and subsequently, in Proposition~\ref{prop:non-existence}, we use this to show that the corresponding subgroups are not epimorphic. For such an analysis we need the following lemmas.

\begin{lem}
\label{lemma0}
Let $q_1,\dots,q_5$ be nonnegative integers with $q_1$ and $q_2$ powers of $p$. Let $c,c_1,c_2\in k$ with $c_1,c_2\neq 0$ and let $z\geq 1$ be an integer. Consider the polynomial $P=c(a+b)^{z}-ca^{z}-cb^{z}$ in $a$ and $b$.
\begin{enumerate}[{\rm (1)}]
\item If $P=c_1a^{q_2}b^{q_1}$
then $z=2q_1=2q_2$, $p\neq 2$ and $c=c_1/2$.
\item If $P=c_1\left(a^{q_1}b^{2q_1}+a^{2q_1}b^{q_1}\right)$
then $z=3q_1$, $p\neq 3$ and $c=c_1/3$.
\item If $P=c_1\left( 2a^{q_1}b^{3q_1}+3a^{2q_1}b^{2q_1}+2a^{3q_1}b^{q_1}\right)$
and $p\neq 2$ then $z=4q_1$ and $c=c_1/2$.
\item If $P=c_1\left(a^{q_1}b^{4q_1}+2a^{2q_1}b^{3q_1}+2a^{3q_1}b^{2q_1}+a^{4q_1}b^{q_1}\right)$
and $p\neq 2$ then $z=5q_1$, $p\neq 5$, $c=c_1/5$.
\item If $P=c_1a^{q_3}b^{2q_1}+c_2a^{q_4}b^{q_1}$
then $z=3q_1=3q_3$, $q_4=2q_1$, $p\neq 3$, and $c=c_1/3=c_2/3$.
\item If $P=c_1a^{q_4}b^{q_1}+c_2a^{q_5}b^{q_2}$ then one of the following holds:
  \begin{enumerate}
  \item[{\rm (I)}]
$z$ is a power of $p$, $q_4=q_5$, $q_1=q_2$ and $c_1+c_2=0$, or

  \item[{\rm (II)}]$z=2q_1$, $q_1=q_2=q_4=q_5$, $p\neq 2$ and $c=(c_1+c_2)/2$, or
    
\item[{\rm (III)}] $q_5=q_1\neq q_2=q_4$.
  
\end{enumerate}
\end{enumerate}
\end{lem}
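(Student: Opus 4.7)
The plan is to expand $P$ via the binomial theorem as
\[
P = c(a+b)^z - ca^z - cb^z = c \sum_{k=1}^{z-1} \binom{z}{k}\, a^{z-k} b^k,
\]
so that the monomials of $P$ are indexed by their $b$-exponent $k \in \{1, \ldots, z-1\}$, and then to match these middle terms against the monomials on the right-hand side of each hypothesis.

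For parts (1)--(5) the strategy is uniform. First, the $b$-exponents occurring on the right-hand side identify precisely the values of $k$ at which $\binom{z}{k}$ must be nonzero modulo $p$, and matching the corresponding $a$-exponents $z-k$ immediately pins down $z$ as a small integer multiple of $q_1$ (namely $2q_1$, $3q_1$, $4q_1$, $5q_1$ and $3q_1$, respectively). Once $z$ is determined, I would invoke Lucas' theorem: since $q_1$ (and, where relevant, $q_2$) is a power of $p$, writing $z$ in base $p$ tells us exactly which $\binom{z}{k}$ survive modulo $p$, namely those $k$ whose base-$p$ digits are termwise bounded by those of $z$. Demanding that precisely the prescribed set of middle terms survive, and no spurious ones appear, forces the stated characteristic restrictions ($p \neq 2$ in (1), $p \neq 3$ in (2) and (5), $p \neq 5$ in (4)); reading off any surviving coefficient then determines $c$, for instance $2c = c_1$ in (1) yields $c = c_1/2$.

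Part (6) is the main obstacle, since the right-hand side has two general monomials and the exponents $q_4, q_5$ are not constrained a priori. I would branch on whether $q_1 = q_2$. If $q_1 = q_2$, both monomials share the same $b$-exponent, and since $P$ contains at most one monomial per $b$-exponent one must have $q_4 = q_5$; combining the two terms into $(c_1+c_2) a^{q_4} b^{q_1}$ reduces to the analysis of part~(1) when $c_1 + c_2 \neq 0$, yielding case (II), whereas $c_1 + c_2 = 0$ forces $P = 0$, equivalently $(a+b)^z = a^z + b^z$, which by the Frobenius characterization means $z$ is a power of $p$, giving case (I). If $q_1 \neq q_2$, the two monomials live at distinct middle indices $k = q_1$ and $k = q_2$; matching $a$-exponents gives $z = q_1 + q_4 = q_2 + q_5$, and applying Lucas' theorem to $z = q_1 + q_2$ (whose base-$p$ expansion has exactly two nonzero digits) confirms that exactly these two middle terms survive, producing case (III) after the identifications $q_5 = q_1$ and $q_4 = q_2$. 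The main technical care throughout is the bookkeeping with base-$p$ expansions and the verification that in the small characteristics $p = 2, 3, 5$ any apparent coefficient collapse on either side is compatible with the claimed formula for $c$.
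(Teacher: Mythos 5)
Your proposal is correct in substance and is close in spirit to the paper's argument: both come down to deciding which binomial coefficients $\binom{z}{k}$ survive modulo $p$ and reading off $z$ and $c$ from the surviving middle monomials. The paper packages this step differently: it writes $z=xy$ with $y$ the $p$-part of $z$, uses $(a+b)^z=(a^y+b^y)^x$, and observes that the extremal middle coefficient $\binom{x}{1}=x$ is a unit mod $p$, so the monomial $xa^{y}b^{y(x-1)}$ must literally occur among the listed monomials. That single observation pins down $y$ and $x$ at once and makes the characteristic exclusions ($p\nmid x$) and the value of $c$ automatic, whereas your Lucas-based route must revisit the small characteristics separately -- your deferred ``coefficient collapse'' checks (e.g.\ $p=3$ in parts (3) and (4), where some listed monomials vanish and the surviving $\binom{z}{k}$ change) do work out, but they are precisely the bookkeeping the paper's normalization avoids.

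The one genuine gap is in part (6), case $q_1\neq q_2$. To reach conclusion (III) you must \emph{prove} $z=q_1+q_2$, since that is what turns $z=q_1+q_4=q_2+q_5$ into $q_4=q_2$ and $q_5=q_1$; as written you assume it (``applying Lucas' theorem to $z=q_1+q_2$'') and then only verify the converse, namely that such a $z$ yields exactly these two middle terms. The forward direction does follow from your tools and should be spelled out: the surviving middle indices are exactly the two distinct $p$-powers $q_1,q_2$, so if $z=\sum z_ip^i$ then $\prod_i(z_i+1)=4$; the alternative $z=3p^e$ would make the surviving middle indices $p^e$ and $2p^e$, forcing $2p^e$ to equal one of $q_1,q_2$ and hence $p=2$, contradicting the digit $3$; so $z$ has exactly two unit digits, at the positions of $q_1$ and $q_2$, i.e.\ $z=q_1+q_2$. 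Two smaller points: your reduction of (6)(II) to part (1) uses part (1) with $q_4$ in the role of $q_2$ although $q_4$ is not known to be a $p$-power, so the homogeneity-plus-Lucas argument for (1) should be rerun under that weaker hypothesis (it goes through, giving $z=2p^e$ and $q_4=q_1=p^e$); and your claim in case (I) that $P=0$ forces $z$ to be a $p$-power tacitly assumes $c\neq 0$ -- the paper's own proof makes the same tacit assumption, so this is not a discrepancy with the source.
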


\begin{proof}
Notice that in (1)--(5) the constant $c$ is different from zero. 
Let $z=xy$ with $y$ the highest power of $p$ dividing $z$.
Notice also that for (1)--(5) we have $x>1$ since $c_1,c_2\neq 0$.
Since
$$
(a+b)^{z}=(a+b)^{xy}=(a^{y}+b^{y})^{x}=a^{xy}+xa^{y}b^{y(x-1)}+\dots +xa^{y(x-1)}b^{y}+b^{xy}
$$
and since $x$ is prime to $p$, one of the monomials in the expression of $P$ must equal $xa^{y}b^{y(x-1)}$ in each case. Thus, for (1) we have $y(x-1)=y$, so $x=2$ and $q_1=q_2$. Hence $p\neq 2$ and $c=c_1/2$. For (2) we have $a^{y}b^{y(x-1)}=a^{q_1}b^{2q_1}$, hence $y=q_1$ and $x=3$. Therefore $p\neq 3$ and $c=c_1/3$. For (3) we have $a^{y}b^{y(x-1)}=a^{q_1}b^{3q_1}$, hence $y=q_1$, $x=4$ and $c=c_1/2$. For (4) we have $a^{y}b^{y(x-1)}=a^{q_1}b^{4q_1}$, hence $y=q_1$ and $x=5$. Therefore $p\neq 5$ and $c=c_1/5$. For (5) we have $a^{y}b^{y(x-1)},a^{y(x-1)}b^{y}\in \{a^{q_3}b^{2q_1},a^{q_4}b^{q_1}\}$. Hence $y=q_1$, $x=3$, $q_3=q_1$ and $q_4=2q_1$. Therefore $p\neq 3$ and $c=c_1/3=c_2/3$. 

For (6), if $q_1=q_2$, $q_4=q_5$ and $c_1+c_2=0$ then $z$ is a power of $p$. Else $P\neq 0$ and we have $a^{y}b^{y(x-1)},a^{y(x-1)}b^{y}\in \{a^{q_4}b^{q_1},a^{q_5}b^{q_2}\}$. Since $p\mid {x\choose i}\Leftrightarrow p\mid {x\choose x-i}$, we have $q_4=q_2$ and $q_5=q_1$ which are powers of $p$. If $q_1\neq q_2$ we are in Case (III). For $q_1=q_2$ we have
$$
c(a+b)^{z}-ca^{z}-cb^{z}
=
(c_1+c_2)a^{q_1}b^{q_1}.
$$
Then, if $c_1+c_2=0$ we are in Case (I). Else, if $c_1+c_2\neq 0$, by (1), $p\neq 2$, $c=(c_1+c_2)/2$ and $z=2q_1$. This is Case (II).
\end{proof}

\begin{lem}
\label{lemmap}
Let $p$ be a prime and $f\geq 1$ an integer. Then
$$
\frac{2^{f+2}-1}{3},\frac{2^{f+1}+1}{3},\frac{p^f+1}{2},\frac{p^{f+1}+3}{2},\frac{3p^f+1}{2}
$$
are not powers of $2,2,p,p,p$ respectively.
\end{lem}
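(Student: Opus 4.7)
The plan is to treat each of the five expressions by an elementary divisibility argument: set the expression equal to the putative power, clear denominators, and reduce modulo a suitably chosen prime to force a contradiction with $f \geq 1$.

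For the first two expressions, the key observation is parity. Both $2^{f+2}-1$ and $2^{f+1}+1$ are odd, and $3$ is odd, so whenever either $\frac{2^{f+2}-1}{3}$ or $\frac{2^{f+1}+1}{3}$ is an integer, that integer is odd. The only odd power of $2$ is $1$, but for $f \geq 1$ the two expressions are strictly greater than $1$ (one checks this directly for the smallest admissible value of $f$), so neither can be a power of $2$.

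For the remaining three expressions, first dispose of $p = 2$: each of $p^f + 1$, $p^{f+1} + 3$, and $3p^f + 1$ is odd for $f \geq 1$, hence division by $2$ does not yield an integer. Assume then that $p$ is odd, and suppose for contradiction that the expression equals $p^k$ for some $k \geq 0$. Clearing denominators produces an equation of the form $p^a + c = 2p^k$ with $c \in \{1,3\}$, or $3p^f + 1 = 2p^k$. Reducing modulo $p$, the monomial $p^a$ (resp. $3p^f$) vanishes, so if $k \geq 1$ then $p$ must divide the constant term. In cases (3) and (5) the constant is $1$, which is an immediate contradiction, forcing $k = 0$; the resulting equation $p^f = 1$ or $3p^f = 1$ then contradicts $f \geq 1$.

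The only sub-case needing a second pass is (4) with $p = 3$, because there $p$ divides the constant $3$. In that case the equation $3^{f+1} + 3 = 2 \cdot 3^k$ factors as $3^f + 1 = 2 \cdot 3^{k-1}$ (for $k \geq 1$); reducing modulo $3$ then forces $k - 1 = 0$, hence $3^f = 1$, which again contradicts $f \geq 1$. The remaining boundary values $k = 0$ and $k = 1$ in each case are dispatched by inspection. This last sub-case is the only minor obstacle, and it is resolved by the same mod-$p$ reduction applied one level down after extracting a single power of $3$.
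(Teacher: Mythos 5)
Your proof is correct and uses essentially the same elementary divisibility argument as the paper: clear denominators and extract the relevant power of $2$ or $p$ to force a contradiction with $f\geq 1$, with the $p=3$ subcase of the fourth expression handled separately. The only cosmetic difference is that you dispatch the first two expressions by a parity observation rather than the paper's explicit factoring, and you reduce modulo $p$ where the paper factors out $p^f$; both amount to the same valuation comparison.
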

\begin{proof}
  If for some integer $m$ we have $2^{m}=(2^{f+2}-1)/3$,
  then $m\geq f$ and $2^f(3\cdot 2^{m-f}-4)=-1$ which is not possible. The second case is similar.
  If there is $m$ such that $2p^{m}=p^f+1$
  then  $p\neq 2$, $m\geq f$ and $p^f(2p^{m-f}-1)=1$.
  Thus, $f$ has to equal $0$, a contradiction.
  If there is $m$ such that $2p^{m}=p^{f+1}+3$
  then  $p\neq 2$, $m\geq f+1$ and $p^{f+1}(2p^{m-f-1}-1)=3$ which is not possible.
  If there is $m$ such that $2p^{m}=3p^f+1$
  then  $p\neq 2$, $m\geq f$ and $p^f(2p^{m-f}-3)=1$.
  Thus, $f$ has to equal $0$, a contradiction.
\end{proof}

\begin{lem}
  \label{lem:root_group_case}
  If $U_H$ is a root subgroup of $G$ then $H$ is not epimorphic in $G$.
\end{lem}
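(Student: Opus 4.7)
The plan is to embed $H$ inside a proper closed reductive subgroup of $G$ and then apply Corollary~\ref{cor:reductive_proper}. Since $U_H\subseteq U$ is assumed to be a root subgroup, we may write $U_H=U_\alpha$ for some positive root $\alpha$. The natural candidate for the ambient group is
\[
L:=T\cdot\langle U_\alpha,U_{-\alpha}\rangle,
\]
which clearly contains $H=T_H\cdot U_\alpha$.

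Two observations will complete the argument. First, I would check that $L$ is reductive: the subgroup $X:=\langle U_\alpha,U_{-\alpha}\rangle$ is of type $A_1$, its maximal torus $\alpha^\vee(\GG_m(k))$ lies in $T$, and $T$ normalizes both of $U_\alpha$ and $U_{-\alpha}$, so $L$ is the almost direct product of the torus $T$ with the semisimple group $X$, hence reductive. Second, $L$ is properly contained in $G$ for dimension reasons: $\dim L=\dim T+\dim X-1=4$, while $\dim G\in\{8,10,14\}$ for $G\in\{\SL_3(k),\Sp_4(k),G_2(k)\}$. Since $H\subseteq L$ and $\dim L\neq \dim G$, Corollary~\ref{cor:reductive_proper} applies directly and yields that $H$ is not epimorphic in $G$. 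No step is characteristic-dependent and no casework over positive roots is needed, so I do not anticipate any real obstacle.
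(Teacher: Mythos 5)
Your proposal is correct and is essentially the paper's own argument: the group $L=T\cdot\langle U_\alpha,U_{-\alpha}\rangle$ you construct is precisely the proper Levi subgroup (up to conjugacy by $N_G(T)$, since every root is Weyl-conjugate to a simple one) that the paper invokes before applying Corollary~\ref{cor:reductive_proper}. Your explicit verification of reductivity and the dimension count just make the same step self-contained.
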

\begin{proof}
  If $U_H$ is a root group, $H$ lies in a reductive proper subgroup of $G$, namely a proper Levi subgroup, and the claim follows from Corollary~\ref{cor:reductive_proper}.
  \end{proof}

\begin{lem}
  \label{cr_is_one}
  Let $u$ be as in \eqref{unipotent_poly}. For any two distinct roots $\alpha_i$ and $\alpha_j$, conjugating by an element in $T$ we may assume that $c_i=c_j=1$.
\end{lem}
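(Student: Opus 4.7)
The plan is to construct the conjugating element $t$ explicitly from $T$. Using the defining relation $t\,u_k(y)\,t^{-1}=u_k(\alpha_k(t)\,y)$ for the root subgroups, conjugating the parametrization \eqref{unipotent_poly} of $U_H$ by $t\in T$ yields
$$
{}^{t}u(x)=\prod_k u_k\!\bigl(\alpha_k(t)\,c_k\,x^{q_k}\bigr).
$$
Hence $tHt^{-1}=(tU_Ht^{-1})\rtimes T_H$ (the torus $T_H$ is unchanged since $T$ is abelian), and $tU_Ht^{-1}$ is parametrized exactly as in \eqref{unipotent_poly}, with the same exponents $q_k$ but new constants $c_k'=\alpha_k(t)\,c_k$.

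It therefore suffices to find $t\in T$ such that $\alpha_i(t)=c_i^{-1}$ and $\alpha_j(t)=c_j^{-1}$, i.e., such that $t$ maps to $(c_i^{-1},c_j^{-1})$ under the homomorphism of algebraic groups
$$
\pi:T\longrightarrow\GG_m(k)\times\GG_m(k),\qquad t\mapsto\bigl(\alpha_i(t),\alpha_j(t)\bigr).
$$
The comorphism of $\pi$ sends the standard basis of $X(\GG_m(k)^2)=\ZZ^2$ to $\alpha_i$ and $\alpha_j$ in $X(T)$. Since $G$ is simple of rank $2$ its root system is reduced, so the two distinct roots $\alpha_i$ and $\alpha_j$ are linearly independent in $X(T)\otimes_{\ZZ}\mathbb{Q}$; as $X(T)$ is torsion-free, the comorphism is injective, and therefore $\pi$ is surjective.

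The required $t$ exists, and the conjugate subgroup then has $c_i'=c_j'=1$. Since conjugation by any element of $G$ is an automorphism of $G$, it preserves epimorphicity, so we may freely replace $H$ by $tHt^{-1}$, proving the claim. The only substantive point in the argument is the surjectivity of $\pi$, which reduces via character-group duality to the reducedness of the rank $2$ root systems $A_2$, $B_2$, $G_2$; everything else is a direct unwinding of the formulas for the action of $T$ on the root subgroups.
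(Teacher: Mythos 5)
Your argument is correct and is exactly the approach the paper takes: the paper's proof is a one-sentence appeal to the linear independence of two distinct (positive) roots, $\dim T\geq 2$, and the algebraic closedness of $k$, and your write-up is a careful unwinding of precisely those three facts via the surjectivity of $t\mapsto(\alpha_i(t),\alpha_j(t))$. No gaps; the only cosmetic point is that reducedness alone gives independence of distinct roots up to sign, and it is their common positivity that rules out $\alpha_j=-\alpha_i$.
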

\begin{proof}
This is a consequence of the fact that two distinct positive roots are linearly independent in the character group $X(T)$, that $\dim(T)\geq 2$ and that the field is algebraically closed.
\end{proof}

We are now in a position to describe the possibilities for a two-dimensional closed connected non-abelian subgroup $H$ (of a rank 2 simply connected simple algebraic group)
which contains both unipotent and semisimple elements. With the discussion at the beginning of the section, we may describe such a group by indicating the tuples $(q_1,q_2,\dots)$, $(c_1,c_2,\dots)$ and $(m_1,m_2)$.

\begin{lem}
\label{lem:structure}
If $U_H$ is not a root group, then the only possibilities for $u:k\rightarrow U_H$ and $t:k^{\times}\rightarrow T_H$, up to conjugacy and exceptional isogenies, are given in Tables $\ref{tab:A2_cases}$, $\ref{tab:B2_cases}$ and $\ref{tab:G2_cases}$ for $G=\SL_3(k)$, $\Sp_4(k)$, $G_2(k)$, respectively, where $m\neq 0$ is an integer, $q_i=p^{f_i}$ for some integers $f_i\geq 0$ and where $c_i\in k$ and $d_i\in k^{\times}$ for $1\leq i\leq 6$.
\end{lem}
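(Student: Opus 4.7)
The plan is to extract, from the two functional equations satisfied by the pair $(u,t)$—the homomorphism condition $u(x+y)=u(x)u(y)$ coming from $U_H\cong \GG_a(k)$, and the conjugation relation ${}^{t(\lambda)}u(x)=u(\lambda^m x)$—a finite system of arithmetic constraints on the parameters $m_1,m_2,m$, $q_i$ and $c_i$, and then to check that after conjugation by $T$ and, where relevant, application of an exceptional isogeny, the resulting admissible tuples reduce to the entries recorded in Tables~\ref{tab:A2_cases}, \ref{tab:B2_cases} and \ref{tab:G2_cases}.

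First I would analyze the conjugation relation. Comparing the $T$-action on each factor $u_i(c_ix^{q_i})$ with the prescribed $u(\lambda^m x)$ yields, for each $i$ with $c_i\neq 0$, the linear equation $\langle\alpha_i,m_1\alpha_1^\vee+m_2\alpha_2^\vee\rangle = mq_i$. In each rank-$2$ root system this is a small system of integer equations that already restricts which subsets of positive roots may support nonzero $c_i$ and, in combination with Lemma~\ref{lemmap}, rules out several numerical values that otherwise appear compatible but fail to be powers of $p$. Next I would use the Chevalley commutator formulas to rewrite $u(x)u(y)=\prod_i u_i(c_ix^{q_i})\prod_j u_j(c_jy^{q_j})$ in the fixed positive-root order, and compare, inside each root subgroup $U_{\alpha_k}$, the resulting polynomial in $x,y$ with $u_k(c_k(x+y)^{q_k})$. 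These identities have exactly the shape
\[
c_k(a+b)^{z}-c_k a^{z}-c_k b^{z}=\text{(sum of monomials in }a^{q_i}b^{q_j}\text{)}
\]
treated in Lemma~\ref{lemma0}. The lowest nonzero root in the ordering gives directly that the corresponding $q$ is a power of $p$, and an induction on root height, using parts (1)--(6) of Lemma~\ref{lemma0}, forces every relevant $q_i$ to be a power of $p$ and ties the nonzero $c_i$ together by the explicit rational relations appearing in the tables.

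Once both sets of constraints are in hand, I would invoke Lemma~\ref{cr_is_one} to normalize two of the nonzero $c_i$ to $1$ by conjugating with a suitable element of $T$, and Lemma~\ref{lem:root_group_case} to dispose of the degenerate single-root situation excluded by hypothesis. For $\Sp_4(k)$ in characteristic $2$ and $G_2(k)$ in characteristic $3$, I would apply the exceptional isogenies of \cite[Theorem 28]{Steinberg} to identify configurations related by the interchange of long and short roots, collapsing what would otherwise be duplicate entries in Tables~\ref{tab:B2_cases} and \ref{tab:G2_cases}. The remaining lists, together with the torus data $(m_1,m_2)$ recovered from the linear system above, are precisely what the tables record.

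The main obstacle will be the bookkeeping for $G_2(k)$: with six positive roots, structure constants of absolute value up to $3$, and a nontrivial interaction between the generic case and the characteristic $3$ exceptional isogeny, the branching provided by case~(6) of Lemma~\ref{lemma0} must be combined carefully with the torus linear system to obtain a closed list. In particular, ruling out, or reducing via conjugation and isogeny, the subcase~(6)(III), in which two distinct exponents coexist within a single root group, and reconciling the outcome with the $\SL_2$-type relations coming from the pair $(\alpha_1,\alpha_2)$, is the most delicate point of the argument.
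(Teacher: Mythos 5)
Your proposal follows essentially the same route as the paper's proof: derive the polynomial identities for the $c_i,q_i$ from the additivity of $u$ via the commutator formulas and resolve them with Lemma~\ref{lemma0}, impose the linear torus-compatibility conditions $\langle\alpha_i,m_1\alpha_1^\vee+m_2\alpha_2^\vee\rangle=mq_i$ together with Lemma~\ref{lemmap}, normalize two constants by Lemma~\ref{cr_is_one}, and identify redundant configurations via $N_G(T)$-conjugation and the exceptional isogenies. The paper organizes the resulting enumeration as a case division on which $c_i$ vanish rather than an induction on root height, and it indeed disposes of subcase (6)(III) exactly where you predict the delicacy lies (the torus relation forces $q_3=q_2$, a contradiction), so the plan matches the published argument in substance.
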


\begin{proof}
  Note that since $T_H$ lies in $T$, it suffices to show that $U_H$ is conjugate by an element of $N_G(T)$ to some group in the tables. In particular, we may assume that $U_H$ has a nontrivial projection onto $U_\alpha$, for some simple root $\alpha$.

  \FloatBarrier
\renewcommand{\arraystretch}{1.3}
\begin{table}[ht]
  \begin{tabular}{| c | l | l | l | l |}
    \hline
   Case & $q_i$ & $c_1,c_2,c_3$ & $m_i/m$ & $p$\\
   \hline
  1 &  $q_1,q_1,2q_1$ & $1,1,\frac{1}{2}$ & $q_1,q_1$  & $p\geq 3$\\
    \hline
  2 &  $q_1,-,q_3$ & $1,0,1$ & $\frac{1}{3}(q_1+q_3),\frac{1}{3}(2q_3-q_1)$ & any \\
      \hline
    \end{tabular}
\caption{$H$ in $\SL_3(k)$}
  \label{tab:A2_cases}
\end{table}
\FloatBarrier
  
\noindent{\textbf{Cases for $\SL_3(k)$:}} Using the additivity of $u:k\rightarrow U_H$, $u(a)u(b)=u(a+b)$, and identifying the projections onto the root groups, we obtain the following equations
  \begin{equation}  
    \label{eq:add_sys_a2}
    \left\{
    \begin{aligned}
      &c_1(a+b)^{q_1}
      =
      c_1a^{q_1}+c_1b^{q_1}
      \\
      &c_2(a+b)^{q_2}
      =
      c_2a^{q_2}+c_2b^{q_2}
      \\
      &c_3(a+b)^{q_3}
      =
      c_3a^{q_3}+c_3b^{q_3}+c_1c_2a^{q_2}b^{q_1}.
    \end{aligned}
    \right.
  \end{equation}
  \medbreak
  \noindent{\textbf{Case 1:}} If $c_1\neq 0$ and $c_2\neq 0$, by Lemma~\ref{cr_is_one} we may assume that $c_1=c_2=1$. Moreover $q_1$ and $q_2$ are powers of $p$. Thus, from the last equation in \eqref{eq:add_sys_a2} we see that $p\neq 2$, $q_1=q_2=q_3/2$ and $c_3=1/2$. The compatibility of the torus $T_H$ with $U_H$ restricts the possibilities for $t:k^{\times}\rightarrow T_H$ and we get that $m_1=m_2=mq_1$.
  \medbreak
  \noindent{\textbf{Case 2:}} If $c_1\neq 0$ and $c_2=0$, since we exclude root groups, we may assume that $c_3\neq 0$. By Lemma~\ref{cr_is_one}, we may assume that $c_1=c_3=1$. Again, $q_1$ and $q_3$ are powers of $p$ and the compatibility with $T_H$ implies that $m_1=m(q_1+q_3)/3$, $m_2=m(2q_3-q_1)/3$. The case where $c_1=0$ and $c_2\neq0$ is covered by conjugating by an element of $N_G(T)$.

%
%
%
%
%
%
  \noindent{\textbf{Cases for $\Sp_4(k)$:}}
Using the additivity of $u:k\rightarrow U_H$, $u(a)u(b)=u(a+b)$, and identifying the projections onto the root groups, we obtain the following system:
  \begin{equation}
    \label{eq:add_sys_b2}
    \left\{
    \begin{aligned}
      &c_1(a+b)^{q_1}
      =
      c_1a^{q_1}+c_1b^{q_1}
      \\
      &c_2(a+b)^{q_2}
      =
      c_2a^{q_2}+c_2b^{q_2}
      \\
      &c_3(a+b)^{q_3}
      =
      c_3a^{q_3}+c_3b^{q_3}-c_1c_2a^{q_2}b^{q_1}
      \\
      &c_4(a+b)^{q_4}
      =
      c_4a^{q_4}+c_4b^{q_4}-c_1c_2^2a^{2q_2}b^{q_1}
      -2c_2^2c_1a^{q_2}b^{q_1+q_2}+2c_2c_3a^{q_3}b^{q_2}.
    \end{aligned}
    \right.
  \end{equation}

  \noindent{\textbf{Case 1:}} If $c_1\neq 0$ and $c_2\neq 0$, by Lemma~\ref{cr_is_one} we may assume that $c_1=c_2=1$. Moreover, by the first two equations, $q_1$ and $q_2$ are powers of $p$. Applying Lemma~\ref{lemma0}.(1) we have $c_3\neq 0$, $p\neq 2$, $q_3=2q_1=2q_2$ and $c_3=-1/2$. Thus, the last equation in \eqref{eq:add_sys_b2} is
$$
c_4a^{q_4}+c_4b^{q_4}-2a^{2q_1}b^{q_1}-2a^{q_1}b^{2q_1}=c_4(a+b)^{q_4}.
$$
Since $p\neq 2$, from Lemma~\ref{lemma0}.(2) it follows that $p\neq 3$, $c_4=-2/3$ and $q_4=3q_1$. Imposing the compatibility with $T_H$, we get $m_1=2mq_1$, $m_2=3mq_1/2$.

\renewcommand{\arraystretch}{1.3}
\begin{table}[ht]
  \begin{tabular}{| c | l | l | l | l |}
    \hline
   Case & $q_i$ & $c_1,c_2,c_3,c_4$ & $m_i/m$ & $p$\\
   \hline
  1 &  $q_1,q_1,2q_1,3q_1$ & $1,1,-\frac{1}{2},-\frac{2}{3}$ & $2q_1,\frac{3}{2}q_1$  & $p\geq 5$\\
    \hline
  2 &  $q_1,-,-,q_4$ & $1,0,0,1$ & $\frac{1}{2}(q_1+q_4),\frac{1}{2}q_4$ & any \\
    \hline
  3 &  $q_1,-,q_1,q_1$ & $1,0,1,d_4$ & $q_1,\frac{1}{2}q_1$  & any \\
    \hline
  4 &  $q_1,-,q_3,-$ & $1,0,1,0$ & $q_3,\frac{1}{2}(2q_3-q_1)$ & any \\
    \hline
  5 & $-,q_2,q_2,2q_2$ & $0,1,1,1$ & $q_2,q_2$ & $p\geq 3$ \\
  \hline
  6 & $-,q_2,q_2,2q_2$ & $0,1,1,d_4$ & $q_2,q_2$ & $p=2$ \\
  \hline
    \end{tabular}
\caption{$H$ in $\Sp_4(k)$}
  \label{tab:B2_cases}
\end{table}
\noindent{\textbf{Case 2:}} Consider System \eqref{eq:add_sys_b2} with $c_1\neq 0$ and $c_2=c_3=0$. Since we exclude root groups, by Lemma~\ref{cr_is_one} we may assume that $c_1=c_4=1$, and we have that $q_1$, $q_4$ are $p$-powers. The compatibility with $T_H$ gives $m_1=m(q_1+q_4)/2$, $m_2=mq_4/2$.
\medbreak
\noindent{\textbf{Case 3:}} If $c_1,c_3,c_4\neq0$ and $c_2=0$ we may assume that $c_1=c_3=1$ by Lemma~\ref{cr_is_one}, and so $q_1$ and $q_3$ are $p$-powers. Since $c_4\neq 0$, the compatibility with $T_H$ yields $m_1=mq_3$, $m_2=mq_4/2$ and $2q_3=q_4+q_1$. The condition $2q_3=q_4+q_1$ implies that $q_1=q_4$, hence $q_1=q_3=q_4$. Indeed if $q_i=p^{f_i}$ and $q_4\geq q_1$, then $2q_3=q_1(p^{f_4-f_1}+1)$ and by Lemma~\ref{lemmap}, this equation doesn't have solutions unless $f_4=f_1$.
\medbreak
\noindent{\textbf{Case 4:}} Consider System \eqref{eq:add_sys_b2} with $c_1,c_3\neq0$ and $c_2,c_4=0$. Again we may assume that $c_1=c_3=1$ by Lemma~\ref{cr_is_one}, so $q_1$ and $q_3$ are again $p$-powers. From the compatibility with $T_H$ we get $m_1=mq_3$, $m_2=m(2q_3-q_1)/2$. Conjugating by an element of $N_G(T)$, we obtain also the case where $c_1=c_3=0$ and $c_2\neq 0$. Indeed, since we exclude root groups, in this case $c_4\neq 0$ and by Lemma~\ref{cr_is_one} we may assume that $c_2=c_4=1$. Conjugating with a representative of $s_{\alpha_2}s_{\alpha_1}$ in $N_G(T)$, we obtain the subgroups treated previously.
\medbreak
\noindent{\textbf{Cases 5,6:}} Let $c_1=0$, $c_2,c_3\neq 0$ and assume first that $p\neq 2$. Then $c_2=c_3=1$ by Lemma~\ref{cr_is_one}, and here we find that $q_2$ and $q_3$ are $p$-powers. By Lemma~\ref{lemma0}.(1) we have $c_4\neq 0$, $q_2=q_3$, $q_4=2q_2$ and $c_4=1$. The compatibility with $T_H$ gives $m_1=m(q_4-q_2)$, $m_2=mq_4/2$.

Now let $c_1=0$, $c_2,c_3\neq 0$ and $p=2$. Again, $c_2=c_3=1$ by Lemma~\ref{cr_is_one}. Moreover, System \eqref{eq:add_sys_b2} shows that $q_2$, $q_3$ and $q_4$ are powers of $p$ and $c_4$ is arbitrary. For $c_4=0$, the group $U_H$ is obtained from the group described in Case 2 by applying an exceptional isogeny which interchanges the long and the short root groups. For the case when $c_4\neq 0$, the compatibility with the torus gives $q_4=q_2+q_3$. Thus, since $q_2,q_3,q_4$ are powers of $p=2$, we must have $q_4=2q_2=2q_3$.

%
%
%
%
%
%
\renewcommand{\arraystretch}{1.3}
\begin{table}[ht]
  \resizebox{350pt}{!}{%
  \begin{tabular}{| c | l | l | l | l |}
    \hline
     Case & $q_i$ & $c_1,c_2,c_3,c_4,c_5,c_6$ & $m_i/m$ & $p$\\
    \hline
    1 &  $q_1,q_1,2q_1,3q_1,4q_1,5q_1$ & $1,1,\frac{1}{2},\frac{1}{3},\frac{1}{4},-\frac{1}{10}$ & $3q_1,5q_1$ & $\geq 7$\\
    \hline
    2 & $q_1,-,q_1,2q_1,3q_1,3q_1$ & $1,0,1,1,1,-2$ & $2q_1,3q_1$  & $\geq 5$ \\
    \hline
    3 & $q_1,-,q_1,2q_1,3q_1,-$ & $1,0,1,1,1,0$ & $2q_1,3q_1$  & $=2$ \\
    \hline
    4 & $q_1,-,-,q_1,2q_1,q_1$ & $1,0,0,1,\frac{3}{2},d_6$ & $q_1,q_1$  & $\geq 5$ \\
    \hline
    5 & $q_1,-,-,q_1,2q_1,-$ & $1,0,0,1,\frac{3}{2},0$ & $q_1,q_1$  & $\geq 5$ \\
    \hline
    6 & $q_1,-,-,-,2q_1,q_1$ & $1,0,0,0,1,d_6$ & $q_1,q_1$ & $=2$ \\
    \hline
    7 & $q_1,-,-,-,q_5,-$ & $1,0,0,0,1,0$ & $q_5-q_1,2q_5-3q_1$ & any \\
    \hline
    8 & $q_1,-,-,-,3q_1,3q_1$ & $1,0,0,0,1,d_6$ & $2q_1,3q_1$ & $=3$ \\
    \hline
    9 & $q_1,-,-,-,-,q_6$ & $1,0,0,0,0,1$ & $\frac{1}{2}(q_1+q_6),q_6$  & any \\
    \hline
    10 & $-,q_2,q_2,q_2,q_2,2q_2$ & $0,1,1,d_4,d_5,\frac{1}{2}d_5$ & $q_2,2q_2$ & $=3$ \\
    \hline
    11 & $-,q_2,q_2,q_2,q_2,2q_2$ & $0,1,1,d_4,d_4,d_6$ & $q_2,2q_2$ & $=2$ \\
    \hline
    12 & $-,q_2,q_2,q_2,q_2,-$ & $0,1,1,d_4,3d_4,0$ & $q_2,2q_2$ & $\neq 3$ \\
    \hline
    13 & $-,q_2,q_2,q_2,q_2,2q_2$ & $0,1,1,d_4,d_5,\frac{1}{2}(d_5-3d_4)$ & $q_2,2q_2$ & $\geq 5$ \\
    \hline
    14 & $-,q_2,q_2,q_2,-,2q_2$ & $0,1,1,d_4,0,-\frac{3}{2}d_4$ & $q_2,2q_2$ & $\geq 5$ \\
    \hline
    15 & $-,q_2,q_2,-,-,2q_2$ & $0,1,1,0,0,d_6$  & $q_2,2q_2$ & $=2$ \\
    \hline
    16 & $-,2q_3,q_3,-,-,q_3$ & $0,1,1,0,0,d_6$  & $0,q_3$ & $=2$ \\
    \hline
    17 & $-,q_2,-,q_2,q_2,2q_2$ & $0,1,0,1,d_5,\frac{1}{2}d_5$ & $q_2,2q_2$ & $\geq 3$  \\
    \hline
    18 & $-,q_2,-,q_2,-,2q_2$ & $0,1,0,1,0,d_6$ & $q_2,2q_2$ &$=2$   \\
    \hline
    19 & $-,3q_4,-,q_4,-,3q_4$ & $0,1,0,1,0,d_6$ & $q_4,3q_4$ &$=3$   \\
    \hline
    20 & $-,q_2,-,-,q_2,2q_2$ & $0,1,0,0,1,\frac{1}{2}$ & $q_2,2q_2$ & $\geq 3$  \\
    \hline
    21 & $-,q_2,-,-,-,q_6$ & $0,1,0,0,0,1$ & $\frac{1}{3}(2q_6-q_2),q_6$ & any  \\
 \hline
  \end{tabular}
  }
\caption{$H$ in $G_2(k)$}
\label{tab:G2_cases}
\end{table}

\medbreak
\noindent{\textbf{Cases for $G_2(k)$:}} 
Imposing the condition that $u:k\rightarrow U_H$ is a homomorphism of groups, $u(a+b)=u(a)u(b)$, we obtain the following equations:
  \begin{equation}
    \label{eq:add_sys_g2}
    \left\{
    \begin{aligned}
      &c_1(a+b)^{q_1}
      =
      c_1a^{q_1}+c_1b^{q_1}
      \\
      &c_2(a+b)^{q_2}
      =
      c_2a^{q_2}+c_2b^{q_2}
      \\
      &c_3(a+b)^{q_3}
      =
      c_3a^{q_3}+c_3b^{q_3}+c_1c_2a^{q_2}b^{q_1}
      \\
      &c_4(a+b)^{q_4}
      =
      c_4a^{q_4}+c_4b^{q_4}+c_1^{2}c_2a^{q_2}b^{2q_1}+2c_1c_3a^{q_3}b^{q_1}
      \\
      &c_5(a+b)^{q_5}
      =
      c_5a^{q_5}+c_5b^{q_5}+c_1^{3}c_2a^{q_2}b^{3q_1}+3c_1^{2}c_3a^{q_3}b^{2q_1}
      +3c_4c_1a^{q_4}b^{q_1}
      \\
      &c_6(a+b)^{q_6}
      =
      c_6a^{q_6}+c_6b^{q_6}
      -c_1^3c_2^2a^{2q_2}b^{3q_1}
      +c_2^2c_1^{3}a^{q_2}b^{3q_1+q_2}
      \\
      &\qquad
      -3c_1^2c_2c_3a^{q_2+q_3}b^{2q_1}
      -3c_1^2c_2c_3a^{q_2}b^{2q_1+q_3}
      +3c_1^2c_2c_3a^{q_3}b^{2q_1+q_2}
      \\
      &\qquad
      -3c_1c_3^2a^{2q_3}b^{q_1}
      -6c_1c_3^2a^{q_3}b^{q_1+q_3}
      +3c_1c_2c_4a^{q_4}b^{q_1+q_2}
      \\
      &\qquad
      -3c_4c_3a^{q_4}b^{q_3}
      +c_2c_5a^{q_5}b^{q_2}.
    \end{aligned}
    \right.
  \end{equation}
  \medbreak
  \noindent{\textbf{Case 1:}}
  Consider System \eqref{eq:add_sys_g2} with $c_1,c_2\neq 0$. By Lemma~\ref{cr_is_one}, we may assume that $c_1=c_2=1$, and $q_1, q_2$ are $p$-powers. The third equation in \eqref{eq:add_sys_g2} and Lemma~\ref{lemma0}.(1) show that $p\neq 2$, $c_3=1/2$ and $q_1=q_2=q_3/2$. Then, by the fourth equation and Lemma~\ref{lemma0}.(2), we have $p\neq 3$, $q_4=3q_1$ and $c_4=1/3$. Hence, by Lemma~\ref{lemma0}.(3), we must have $c_5=1/4$ and $q_5=4q_1$. Thus, the last two equations are
$$
\left\{
\begin{aligned}
  &\frac{1}{4}(a+b)^{q_5}
  =
  \frac{1}{4}a^{q_5}+\frac{1}{4}b^{q_5}+a^{q_1}b^{3q_1}+\frac{3}{2}a^{2q_1}b^{2q_1}+a^{3q_1}b^{q_1}
  \\
  &
  c_6(a+b)^{q_6}
  =
  c_6a^{q_6}+c_6b^{q_6}
  -\frac{1}{2}a^{q_1}b^{4q_1}
  -a^{2q_1}b^{3q_1}
  -a^{3q_1}b^{2q_1}
  -\frac{1}{2}a^{4q_1}b^{q_1}.
\end{aligned}
\right.
$$
By Lemma~\ref{lemma0}.(4), $p\neq 5$, $q_6=5q_1$ and $c_6=-1/10$. Then, imposing the compatibility condition with the map $t:k^{\times}\rightarrow T_H$, we get $m_1=3mq_1$, $m_2=5mq_1$.
\medbreak
\noindent{\textbf{Case 2:}} Let $c_1,c_3\neq 0$, $c_2=0$ and $p\neq 2$. We may assume that $c_1=c_3=1$ by Lemma~\ref{cr_is_one}, and
$q_1,q_3$ are $p$-powers. By Lemma~\ref{lemma0}.(1), $q_1=q_3$, $q_4=2q_1$ and $c_4=1$ and System \eqref{eq:add_sys_g2} is
$$
\left\{
\begin{aligned}
  &(a+b)^{q_1}
  =
  a^{q_1}+b^{q_1}
  \\
  &(a+b)^{2q_1}
  =
  a^{2q_1}+b^{2q_1}+2a^{q_1}b^{q_1}
  \\
  &c_5(a+b)^{q_5}
  =
  c_5a^{q_5}+c_5b^{q_5}+3a^{q_1}b^{2q_1}+3a^{2q_1}b^{q_1}
  \\
  &c_6(a+b)^{q_6}
  =
  c_6a^{q_6}+c_6b^{q_6}
  -3a^{2q_1}b^{q_1}
  -6a^{q_1}b^{2q_1}
  -3a^{2q_1}b^{q_1}.
\end{aligned}
\right.
$$
If $p> 3$, by Lemma~\ref{lemma0}.(2) we have $c_5=1$, $q_5=3q_1$. Thus, by Lemma~\ref{lemma0}.(2), we have $c_6=-2$ and $q_6=3q_1$. The compatibility with $T_H$ gives $m_1=2mq_1$, $m_2=3mq_1$. If $p=3$, we have four cases according to whether or not $c_5$ and $c_6$ are zero. Moreover, there is an exceptional isogeny $\phi$ interchanging the long and the short root groups. If $c_5,c_6\neq 0$ then $U_H$ maps under $\phi$ to a corresponding group described in Case 10. The two cases $(c_5=0,c_6\neq 0)$ and $(c_5\neq 0,c_6= 0)$ are conjugate by an element of $N_G(T)$ corresponding to a reflection $s_{\alpha_2}$. Consider the case $(c_5=0,c_6\neq 0)$ and notice that by System \eqref{eq:add_sys_g2}, $q_6$ is also a power of $p=3$. Notice also, that the compatibility with $T_H$ gives $q_6=3q_1$. Thus, in this case $U_H$ is mapped under $\phi$ to a corresponding group described in Case 17. The case $(c_5=0,c_6=0)$ is covered by Case 20 if we apply $\phi$.
\medbreak
\noindent{\textbf{Case 3:}} Let $c_1,c_3\neq 0$, $c_2=0$ and $p=2$. As before, we may assume that $c_1=c_3=1$ and $q_1,q_3$ are $p$-powers. System \eqref{eq:add_sys_g2} is
$$
\left\{
\begin{aligned}
  &(a+b)^{q_1}
  =
  a^{q_1}+b^{q_1}
  \\
  &(a+b)^{q_3}
  =
  a^{q_3}+b^{q_3}
  \\
  &c_4(a+b)^{q_4}
  =
  c_4a^{q_4}+c_4b^{q_4}
  \\
  &c_5(a+b)^{q_5}
  =
  c_5a^{q_5}+c_5b^{q_5}+3a^{q_3}b^{2q_1}+3c_4a^{q_4}b^{q_1}
  \\
  &c_6(a+b)^{q_6}
  =
  c_6a^{q_6}+c_6b^{q_6}
  -3a^{2q_3}b^{q_1}
  -3c_4a^{q_4}b^{q_3}.
\end{aligned}
\right.
$$
By Lemma~\ref{lemma0}.(5), we get $q_4=2q_1$, $q_3=q_1$, $c_4=1$, $c_5=1$ and $q_5=3q_1$. Hence, if $c_6=0$, the compatibility with $T_H$ yields $m_1=2mq_1$, $m_2=3mq_1$. If $c_6\neq 0$, by the last equation of the above system, $q_6$ has to be a power of $p=2$, however, the compatibility with $T_H$ implies $q_6=3q_1$, a contradiction.
\medbreak
\noindent{\textbf{Cases 4,5:}} Assume now that $c_1,c_4\neq 0$ and $c_2=c_3=0$. Then $q_1$ and $q_4$ are $p$-powers. With Lemma~\ref{cr_is_one} we may assume that $c_1=c_4=1$ and System \eqref{eq:add_sys_g2} is
$$
\left\{
\begin{aligned}
  &(a+b)^{q_1}
  =
  a^{q_1}+b^{q_1}
  \\
  &(a+b)^{q_4}
  =
  a^{q_4}+b^{q_4}
  \\
  &c_5(a+b)^{q_5}
  =
  c_5a^{q_5}+c_5b^{q_5}+3a^{q_4}b^{q_1}
  \\
  &c_6(a+b)^{q_6}
  =
  c_6a^{q_6}+c_6b^{q_6}.
\end{aligned}
\right.
$$
First assume that $p\neq 3$. Then by Lemma~\ref{lemma0}.(1), $p\neq 2$, $c_5=3/2$, $q_4=q_1$, $q_5=2q_1$ and $c_6$ is arbitrary. If $c_6\neq 0$, we get $m_1=m_2=mq_1$ and $q_6=q_1$ from the compatibility with $T_H$, while for $c_6=0$ we have $m_1=m_2=mq_1$.

Next consider the case where $p=3$. Then $q_5$ and $q_6$ are powers of $3$ if $c_5\neq 0$ and $c_6\neq 0$ respectively. Using an exceptional isogeny $\phi$, we see that the cases $(c_5=0,c_6\neq 0)$ and $(c_5=0,c_6=0)$ follow from Case 19 and Case 21 respectively. Indeed, case $(c_5=0,c_6=0)$ is easily checked while for $(c_5=0,c_6\neq 0)$, using the compatibility with $T_H$ and Lemma~\ref{lemmap}, it follows that $q_1=q_4=q_6$.
For $(c_5\neq 0,c_6=0)$ and $(c_5\ne 0,c_6\neq 0)$, the compatibility with the torus gives $q_5=q_1+q_4$. Hence $q_5=2q_1=2q_4$, a contradiction with $p=3$.
\medbreak
\noindent{\textbf{Cases 6--9:}} Let $c_1\neq 0$ and $c_2=c_3=c_4=0$. Assume first that $c_5\neq 0$. By Lemma~\ref{cr_is_one}, we may assume that $c_1=c_5=1$. Considering System \eqref{eq:add_sys_g2}, we see that $q_1$ and $q_5$ are powers of $p$. If $c_6=0$, the compatibility with $T_H$ gives $m_1=m(q_5-q_1)$, $m_2=m(2q_5-3q_1)$.
 Assume now that $c_6\neq 0$. Then $q_6$ is also a power of $p$ and from the compatibility with $T_H$ we obtain
$$
m_1=\frac{m}{2}(q_1+q_6)=\frac{m}{3}(q_5+q_6)
,\quad
m_2=mq_6
\quad\text{and}\quad
2q_5=3q_1+q_6.
$$
If $q_1<q_6$ then $q_5=q_1\frac{3+p^{f_6-f_1}}{2}$. By Lemma~\ref{lemmap} we have $q_6=pq_1$ and hence $2p^{f_5-f_1}=3+p$. Thus $p\neq 2$ and $p^{f_5-f_1}=\frac{3+p}{2}\leq p$. Since $q_5>q_1$, i.e. $f_5-f_1\geq 1$, we must have $p=3$ and $f_5=f_1+1$. We obtain the case $q_5=q_6=3q_1$, $p=3$ and $m_1=2mq_1$, $m_2=3mq_1$. 
If $q_1>q_6$ it follows from Lemma~\ref{lemmap} that there is no solution. If $q_1=q_6$ then $q_5=2q_1=2q_6$. Thus $p=2$ and $m_1=m_2=mq_1$.

Now let $c_5=0$. We may assume that $c_6=1$ since we exclude root groups. In this case $m_1=m(q_1+q_6)/2$, $m_2=mq_6$.
\medbreak
\noindent{\textbf{Cases 10--13:}} Consider System \eqref{eq:add_sys_g2} with $c_1=0$ and $c_2,c_3\neq 0$. Then $q_2,q_3$ are $p$-powers and by Lemma~\ref{cr_is_one} we may assume that $c_2=c_3=1$. Suppose first that $c_4\neq 0$ and $c_5\neq 0$. If $p=3$, by Lemma~\ref{lemma0}.(1), $q_5=q_2$, $q_6=2q_2$ and $c_6=c_5/2$. In particular $c_6\neq 0$. From the compatibility with $T_H$ we get $m_1=mq_2$, $m_2=2mq_2$ and $q_4=q_3=q_2$. If $p\neq 3$, by Lemma~\ref{lemma0}.(6), we have three cases:

(I) $q_3=q_2$ and $3c_4=c_5$. Then $q_2=q_3$, $q_4=q_5$, $q_6$ is a power of $p$ and $c_6$ is arbitrary. If $c_6\neq 0$, from the compatibility with $T_H$, we get $m_1=mq_2$, $m_2=2mq_2$, $q_2=q_4$
and $q_6=2q_2$ and hence $p=2$ since $q_6$ is also a power of $p$. 

(II) $q_3=q_2$ and $3c_4\neq c_5$. Then $p\geq 5$, $q_2=q_3=q_4=q_5$ and $q_6=2q_2$ so $c_6=(c_5-3c_4)/2\neq 0$. From the compatibility with $T_H$ we get $q_6=2q_2$, $m_1=mq_2$, $m_2=2mq_2$. 

(III) $q_3\neq q_2$. With Lemma~\ref{lemma0} we have $q_5=q_3\neq q_2=q_4$ and the compatibility with $T_H$ gives $q_3=q_2$, a contradiction.
\medbreak
\noindent{\textbf{Case 14:}} Continuing the discussion with $c_1=0$ and $c_2,c_3\neq 0$, let $c_4\neq 0$ and $c_5=0$. First assume that $p=3$. If $c_6\neq0$, by System \eqref{eq:add_sys_g2}, $q_6$ is a power of $p$. Then, from the compatibility with $T_H$ we get $m_1=mq_4$, $m_2=m(q_3+q_4)$ and $q_6=q_3+q_4=(q_2+3q_4)/2$.
Thus $2\mid q_6=q_3+q_4$, but $p=3$, a contradiction. Therefore $c_6=0$ and from the compatibility with $T_H$ we get $m_1=mq_4$, $m_2=m(q_3+q_4)$ and $2q_3=q_2+q_4$. By Lemma~\ref{lemmap}, we cannot have $q_2$ distinct from $q_4$ since $(3^f+1)/2$ is not a power of $3$ for $f\geq1$. Hence $q_2=q_4=q_3$. 
One checks that $U_H$ is mapped by an exceptional isogeny to a corresponding group described in Case 8.

Now let $p\neq 3$. Applying Lemma~\ref{lemma0}.(1) we have $q_4=q_3$, $q_6=2q_3$, $p\neq 2$ and $c_6=-\frac{3}{2}c_4$. From the compatibility with $T_H$ we obtain $m_1=mq_3$, $m_2=2mq_3$, $q_2=q_3$.
\medbreak
\noindent{\textbf{Cases 15,16:}}
Let $c_1=c_4=0$ and $c_2,c_3\neq 0$, so that as usual $q_2,q_3$ are $p$-powers. By Lemma~\ref{cr_is_one}, we may assume that $c_2=c_3=1$. If $c_5\neq 0$, by Lemma~\ref{lemma0}.(1), $p\neq 2$, $q_5=q_2$, $q_6=2q_2$ and $c_6=c_5/2$. In particular $c_6\neq 0$. The compatibility with $T_H$ shows that also $q_2=q_3$. In this case $U_H$ is conjugate by a representative in $N_G(T)$ of $s_{\alpha_1}$ to $U_H$ from Case 17.

If $c_5=c_6=0$ then the corresponding group $U_H$ is conjugate by an element of $N_G(T)$ to the group $U_H$ in Case 7.

Finally, suppose  $c_5=0$ and $c_6\neq 0$, so that $q_6$ is also a power of $p$. From the compatibility with $T_H$ we get $m_1=m(q_6-q_3)$, $m_2=mq_6$, $3q_3=q_2+q_6$. Thus $p=2$, $q_2\ne q_6$,  and either $q_6=2q_2=2q_3$ or $q_2=2q_6=2q_3$. To see this, let $q_i=2^{f_i}$. If $q_2>q_6$ then $3q_3=q_6(1+2^{f_2-f_6})$. 
By Lemma~\ref{lemmap}, the only solution is $f_2-f_6=1$ so $q_2=2q_6$, hence $q_3=q_6$. We get $m_1=0$ and $m_2=mq_3$. If $q_2<q_6$ then $3q_3=q_2(2^{f_6-f_2}+1)$ and the argument is similar: $q_6=2q_2$, hence $q_3=q_2$. We get $m_1=mq_2$ and $m_2=2mq_2$.

\medbreak
\noindent{\textbf{Cases 17,18,19:}}
Consider System~\eqref{eq:add_sys_g2} with $c_1,c_3=0$ and $c_2,c_4\neq 0$, so that $q_2$ and $q_4$ are $p$-powers. By Lemma~\ref{cr_is_one} we may assume that $c_2=c_4=1$. If $c_5\neq 0$, by Lemma~\ref{lemma0}.(1), $q_5=q_2$, $q_6=2q_2$ (so $p\neq 2$) and $c_6=c_5/2$. From the compatibility with $T_H$ we get $m_1=mq_2$, $m_2=2mq_2$ and $q_4=q_2$.

If $c_5, c_6=0$ then the corresponding group $U_H$ is conjugate by an element of $N_G(T)$ to the group $U_H$ in Case 9. If $c_5=0$ and $c_6\neq 0$,
then $q_6$ is a $p$-power, and from the compatibility with $T_H$ we get $m_1=mq_4$, $m_2=mq_6$ and $q_2=2q_6-3q_4$.
If $q_2=q_4$ then $2q_6=4q_2=4q_4$ hence $p=2$, $q_6=2q_2=2q_4$, $m_1=mq_4$, $m_2=2mq_4$. If $q_4>q_2$ then $2q_6=q_2(1+3p^{f_4-f_2})$ and by Lemma~\ref{lemmap} this equation has no solution. If $q_2>q_4$ then $2q_6=q_4(p^{f_2-f_4}+3)$. By Lemma~\ref{lemmap} the only solution is $f_2-f_4=1$. Then $p=3$ and $q_2=q_6=3q_4$.
\medbreak
\noindent{\textbf{Cases 20,21:}}
If $c_1,c_3,c_4=0$ and $c_2,c_5\neq 0$, then $q_2$ and $q_5$ are $p$-powers, and by Lemma~\ref{cr_is_one} we may assume that $c_2,c_5=1$. Again, by Lemma~\ref{lemma0}.(1), $q_5=q_2$, $q_6=2q_2$ (so $p\neq 2$) and $c_6=1/2$. From the compatibility with $T_H$ we get $m_1=mq_2$, $m_2=2mq_2$.

If $c_1,c_3,c_4,c_5=0$ and $c_2,c_6\neq 0$, by Lemma~\ref{cr_is_one}, we may assume that $c_2,c_6=1$. From the compatibility with $T_H$ we get $m_1=m(2q_6-q_2)/3$, $m_2=mq_6$.
\end{proof}

\begin{prop}
  \label{prop:non-existence}
  Let $G$ be a simple algebraic group of rank $2$ defined over an algebraically closed field of characteristic $p>0$. If $H$ is a  closed subgroup of $G$, with $\dim H\leq 2$, then $H$ is not epimorphic in $G$. 
\end{prop}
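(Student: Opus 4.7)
The plan is to apply the classification of Lemma~\ref{lem:structure} case by case. By \S\ref{subsec:criteria}.\eqref{item:BB7} together with Lemmas~\ref{lem:mixed_2_dim} and \ref{lem:root_group_case}, we may assume $H = U_H \rtimes T_H$ is closed, connected, non-abelian, with $\dim U_H = \dim T_H = 1$ and $U_H$ not a root subgroup. Thus $H$ falls into one of the families in Tables~\ref{tab:A2_cases}, \ref{tab:B2_cases} or \ref{tab:G2_cases}, and by Corollary~\ref{cor:up_to_isogeny} subgroups equivalent under an exceptional isogeny need only be treated once. For each such $H$ it remains to produce a witness to non-epimorphicity.

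The primary tool is Theorem~\ref{thm:epi_characterization}.\eqref{item:BB4}: it suffices to exhibit a rational $G$-module $V$ with a nonzero vector fixed by $H$ but not by $G$. Natural candidates are the induced modules $H^0(\lambda)$ for nonzero dominant weights $\lambda = a_1 \omega_1 + a_2 \omega_2$. The highest weight line is automatically fixed by $U_H \subseteq U$ and, since $t = m_1 \alpha_1^{\vee} + m_2 \alpha_2^{\vee}$, it is fixed by $T_H$ exactly when $m_1 a_1 + m_2 a_2 = 0$. Whenever $m_1$ and $m_2$ have opposite signs, or exactly one of them is zero, this equation admits a solution $(a_1, a_2) \in \ZZ_{\geq 0}^2 \setminus \{(0,0)\}$, yielding a nonzero $H$-fixed vector in $H^0(\lambda)$ that is not $G$-fixed. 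This will settle most rows of the tables immediately.

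For the residual rows in which $m_1$ and $m_2$ have the same strict sign (for instance $\Sp_4$ Case~1 and $G_2$ Case~1, whose exponent patterns $q_i$ identify them as Frobenius twists of principal $A_1$ embeddings), the plan is to observe directly that $H$ is contained in a proper reductive subgroup $L$ of $G$ — typically a subsystem subgroup or a twisted $A_1$ subgroup suggested by the heights $q_i$ appearing in the table — and then to apply Corollary~\ref{cor:reductive_proper}. Where such a containment is not immediate, one instead works inside a small-dimensional $G$-module, using the explicit polynomial form of $u(x)$ from~\eqref{unipotent_poly} together with the Chevalley commutator relations: writing the action of a generator of $\Lie(U_H)$ on a $T$-weight basis, restricted to the sum of weight spaces for weights $\mu$ with $\langle \mu, t\rangle = 0$, yields a linear system whose kernel supplies the desired $H$-fixed vector. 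The adjoint module $\Lie(G)$ is often adequate, its $T$-weights being the roots together with~$0$.

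The main obstacle is the sheer volume of bookkeeping required by Table~\ref{tab:G2_cases}, which contains twenty-one families of candidate subgroups, each with its own parameters $q_i, c_i$ and its own sign pattern for $(m_1, m_2)$. A secondary subtlety is that the integer $m$ parameterising $T_H$ may be taken of either sign, so both sign choices must be examined when testing for a solution $(a_1,a_2)$ above; and the exceptional isogenies for $B_2$ at $p=2$ and $G_2$ at $p=3$ must be invoked carefully to reduce the number of genuinely distinct cases.
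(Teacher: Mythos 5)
Your overall architecture matches the paper's: reduce to the families in Tables~\ref{tab:A2_cases}--\ref{tab:G2_cases} and then, case by case, either exhibit an $H$-fixed vector in a rational $G$-module that is not $G$-fixed, or place $H$ inside a proper reductive subgroup and invoke Corollary~\ref{cor:reductive_proper}. However, your load-balancing is wrong, and two of your tools have real defects. First, the ``primary tool'' (the highest weight line of $H^0(a_1\omega_1+a_2\omega_2)$, which is $T_H$-fixed iff $a_1m_1+a_2m_2=0$ with $a_1,a_2\geq 0$) almost never applies: inspecting the tables, in all but a handful of subcases ($\SL_3$ Case 2 and $\Sp_4$ Case 4 when $q_1>2q_3$, $G_2$ Case 7 when $q_5=q_1$, $G_2$ Case 16, $G_2$ Case 21 when $q_2>2q_6$) the entries $m_1,m_2$ have the same strict sign, so no nonzero dominant $\lambda$ vanishes on $t$. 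The bulk of the proof therefore falls on your ``residual'' methods, which you treat as secondary.

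Those residual methods, as described, have two concrete problems. (i) In characteristic $p$ a vector annihilated by a generator of $\Lie(U_H)$ need not be fixed by $U_H$: the root elements are polynomial in $x$ of degree $>1$ (e.g.\ $u_{\alpha_1}(x)=I+x(\cdots)+x^2E_{35}$ on the $7$-dimensional $G_2$-module), so one must impose the vanishing of \emph{every} positive-degree coefficient matrix of $u(x)$, not just the linear term; the paper works with the group action throughout. (ii) For the families carrying two independent $p$-power parameters ($\SL_3$ Case 2, $\Sp_4$ Case 4, $G_2$ Case 7 with $q_5\notin\{q_1,2q_1\}$), restricting a single small module such as $\Lie(G)$ to the $T_H$-weight-zero subspace generically yields nothing usable; the paper instead takes two $U_H$-fixed $T$-weight vectors (not both highest weight vectors, e.g.\ $v_1$ and $v_6$ in the $G_2$ Weyl module) whose $T_H$-weights have opposite signs and combines them in $S^a(V)\otimes S^b(V)$ to manufacture a $T_H$-weight-zero $H$-fixed vector. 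Your highest-weight-only version of this idea cannot reach these cases, since it forces $m_1,m_2$ to have opposite signs. Finally, note that for $\SL_3$ Case 1, $\Sp_4$ Cases 1--2 and $G_2$ Cases 1, 9, 20, 21 the paper does exactly what you propose for the ``principal $A_1$-like'' rows (Levi, subsystem, or twisted/principal $A_1$ overgroups plus Corollary~\ref{cor:reductive_proper}), so that part of your plan is sound.
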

\begin{proof}
  By \S\ref{subsec:criteria}.(7), Corollary~\ref{cor:up_to_isogeny}.(1) and Lemma~\ref{lem:mixed_2_dim}, we may assume that $G$ is simply connected and $H=H^\circ$, with $H=U_H\rtimes T_H$, non-abelian, with $U_H$ $1$-dimensional unipotent and $T_H$ a torus. By Lemma~\ref{lem:root_group_case} and Lemma~\ref{lem:structure} we may assume that $H$ is one of the groups described in Tables~\ref{tab:A2_cases},~\ref{tab:B2_cases} and~\ref{tab:G2_cases}. The proof is by considering in turn each of the cases in these tables.
  
  For a $G$-module $V$, an element $v\in V$ and a nonzero integer $a$, we denote by $\overline{\otimes^a v}$ the image of the $a$-fold tensor product $\otimes^a v$ in the symmetric power $S^a(V)$. Moreover, we write $f_{\alpha}$ for the Chevalley basis vector of the Lie algebra of $G$ corresponding to the negative root $\alpha\in\Phi$.
  \smallbreak
  \noindent{\textbf{Cases for $\SL_3(k)$:}}
  We fix  an ordered basis $(e_1,e_2,e_3)$ of the natural $G$-module $V$, so that the matrices of $u_{\alpha_1}(x)$, $u_{\alpha_2}(x)$ and $u_{\alpha_1+\alpha_2}(x)$, are $I+xE_{12}$, $I+xE_{23}$ and $I+xE_{13}$ respectively.
  \smallbreak
  \noindent{\bf{Case 1:}} Here it is straightforward to check that $H$ is conjugate to the image of a Borel subgroup of ${\rm SL}_2(k)$ under the two-dimensional
  irreducible representation of highest weight $2q_1$. Then by Corollary~\ref{cor:reductive_proper},
  $H$ is not epimorphic.
  \smallbreak
  \noindent{\bf{Case 2:}} Here the torus $T_H$ acts with weights $\frac{m}{3}(q_1+q_3)$, respectively $\frac{m}{3}(q_3-2q_1)$, $\frac{m}{3}(q_1-2q_3)$ on the given basis vectors. If $q_1=q_3$, then $e_1\otimes (e_2-e_3)\otimes (e_2-e_3)\in \otimes^3(V)$ is fixed by $U_H$ and of zero weight for $T_H$ but is not a weight vector for $T$.
  If $q_1>q_3$, so that $q_1\geq 2q_3$, we first suppose $q_1=2q_3$, in which case, $e_1\wedge e_2\in\wedge^2(V)$ is fixed  by $H$, but not by $T$. Next suppose $q_1>2q_3$. Set $a = q_1-2q_3$ and $b = 2q_1-q_3$, both strictly positive integers. Then set $Y = \wedge^2(V)$, and note that $e_1\wedge e_2\in Y$ is fixed  by $U_H$ and has $T_H$-weight $-\frac{m}{3}a$, and $(e_1\wedge e_3)\in Y$ is fixed by $U_H$ and has $T_H$-weight $\frac{m}{3}b$. Thus the vector $w=\overline{\otimes^a(e_1\wedge e_3)}\otimes \overline{\otimes ^b(e_1\wedge e_2)}$ in $S^a(Y)\otimes S^b(Y)$ is fixed by $H$, but is not fixed by $T$. Indeed, the cocharacter $\alpha_1^{\vee}$ has strictly positive weight on $w$. The case $q_3>q_1$ is entirely similar.

  So in all cases, by Theorem~\ref{thm:epi_characterization}.(4), $H$ is not epimorphic in $G$.
  \medbreak
  \noindent{\textbf{Cases for $\Sp_4(k)$:}}
  We fix as follows ordered bases of $V_1$ and $V_2$, the Weyl modules for $G$ of highest weight $\omega_i$, for $i = 1, 2$:
\begin{flalign*}
  V_2:\, & v_{21},\, v_{22}=f_{\alpha_2}v_{21},\, v_{23}= f_{\alpha_1+\alpha_2}v_{21},\, v_{24}=f_{\alpha_1+2\alpha_2}v_{21};
  &&\\
  V_1:\, & v_{11},\, v_{12}=f_{\alpha_1}v_{11},\, v_{13}= f_{\alpha_1+\alpha_2}v_{11},\, v_{14}=f_{\alpha_1+2\alpha_2}v_{11},\,v_{15}=f_{\alpha_1}f_{\alpha_1+2\alpha_2}v_{11}.
\end{flalign*}
(Here $v_{11}$ and $v_{21}$ are highest weight vectors in the respective modules.) We use the matrix expressions of the root groups with respect to these bases.


\FloatBarrier
\renewcommand{\arraystretch}{1.5}
\begin{table}[ht]
  \begin{tabular}{| c | l | c |}
    \hline
    Cases& $w$ & $W$ \\
    \hline
    3 & $c_4(v_{22}\wedge v_{23}) - v_{22}\wedge v_{24}$ & $\wedge^2(V_2)$ \\
    \hline
    4 & $(q_1=2q_3)$ $v_{21}$& $V_2$\\
    & ($q_1=q_3$) $v_{14}$& $V_1$\\
    & $(q_1>2q_3)$ $\overline{\otimes^{2q_1-2q_3}v_{21}}\otimes \overline{\otimes^{q_1-2q_3}v_{14}}$& $S^{2q_1-2q_3}(V_2)\otimes S^{q_1-2q_3}(V_1)$\\
    & $(q_1<q_3$) $\overline{\otimes^{2q_3-2q_1}v_{21}}\otimes \overline{\otimes^{2q_3-q_1}v_{14}}$&    $S^{2q_3-2q_1}(V_2)\otimes S^{2q_3-q_1}(V_1)$           \\
    \hline
    5,6 & $v_{22}-v_{23}$ & $V_2$ \\
    \hline
  \end{tabular}
  \caption{}
  \label{tab:B2_fixed_point}
\end{table}
\FloatBarrier
\medbreak
\noindent{\bf{Case 1:}} As in Case 1 for ${\rm SL}_3(k)$, one can check that the group $H$ is conjugate to the image of a Borel subgroup of ${\rm SL}_2(k)$ acting on the irreducible module of highest weight $3q_1$, and conclude by applying Corollary~\ref{cor:reductive_proper}.
\medbreak
\noindent{\bf{Case 2:}} Here $H$ lies in the maximal rank reductive subgroup of type $A_1\tilde{A_1}$, and we apply again Corollary~\ref{cor:reductive_proper}.
\medbreak
\noindent{\bf{Cases 3,4,5,6:}} In each case we identify a module $W$, a fixed point $w\in W$ for $H$ which is not fixed by $T$ and then apply Theorem~\ref{thm:epi_characterization}.(4) to  see that $H$ is not epimorphic in $G$. The information is given in Table~\ref{tab:B2_fixed_point}. For Case 4, notice that $\alpha_1^{\vee}$ or $\alpha_2^{\vee}$ has strictly  positive weight on $w$.
\medbreak
\noindent{\textbf{Cases for $G_2(k)$:}}
Here we use the explicit matrix representations of root elements
acting on $V$, the $7$-dimensional Weyl module of highest weight $\omega_1$.
We use the structure constants from \textsc{GAP} \cite{GAP},
the same as those used to produce the expressions in Table~\ref{tab:G2_cases},
and fix the following ordered basis $\mathcal B$ of $V$:
$$
\begin{aligned}
  &
  v_1,\quad
  v_2 = f_{\alpha_1}v_1,\quad
  v_3 = f_{\alpha_1+\alpha_2}v_1,\quad
  v_4=f_{2\alpha_1+\alpha_2}v_1,\quad
  \\
  &
  v_5=f_{3\alpha_1+\alpha_2}v_1
  ,\quad
  v_6=f_{3\alpha_1+2\alpha_2}v_1
  ,\quad
  v_7=f_{\alpha_1}f_{3\alpha_1+2\alpha_2}v_1.
\end{aligned}
$$
where $v_1$ is a maximal vector for the fixed Borel subgroup of $G$. Then the root groups are as follows:
$$
\begin{aligned}
  &u_{\alpha_1}(x) = I+ x(E_{12}+2E_{34}+E_{45}+E_{67})+x^2E_{35},\\
  &u_{\alpha_2}(x) = I+x(-E_{23}+E_{56}),\\
  &u_{\alpha_1+\alpha_2}(x) = I+ x(E_{13}-2E_{24}-E_{46}+E_{57})+x^2E_{26},\\
  &u_{2\alpha_1+\alpha_2}(x) = I+ x(2E_{14}-E_{25}+E_{36}-E_{47})-x^2E_{17},\\
  &u_{3\alpha_1+\alpha_2}(x) = I+x(E_{15}+E_{37}),\\
  &u_{3\alpha_1+2\alpha_2}(x) = I+x(E_{16}+E_{27}).
\end{aligned}
$$
\medbreak
\noindent{\bf{Case 1:}}
We argue that $T_HU_H$ lies in a principal $A_1$-subgroup of $G$, and then apply Corollary~\ref{cor:reductive_proper}.
We first show that $U_H$ lies in such a subgroup. The argument is similar to the proof of \cite[Theorem 3]{Testerman1}.
It is known that the principal $A_1$-subgroup in $G_2$ acts irreducibly on the module $V$, when $p\geq 7$, and $V$ is isomorphic to the irreducible
${\rm SL}_2(k)$-module of highest weight $6$. Using the construction of the irreducible representation coming from the action of ${\rm SL}_2(k)$ on the
homogeneous polynomials of degree $6$ in $k[X,Y]$, with the basis $w_i = X^{6-i}Y^i$, $0\leq i\leq 6$, one can check that the action of $u(t)$
on the basis $\{\gamma_iw_i, 1\leq i\leq 7\}$, where we take $(\gamma_1,\gamma_2,\gamma_3,\gamma_4,\gamma_5,\gamma_6,\gamma_7) =
(1,1,-2,-3,-12,-60,-360)$, is precisely the action of $\begin{pmatrix}1&t^{q_1}\cr 0&1\end{pmatrix}$ on the basis $\{v_i,1\leq i\leq 7\}$.
  Moreover, since the action of the toral element
  $t(\lambda) = \alpha_1^\vee(\lambda^{3q_1m})\alpha_2^\vee(\lambda^{5q_1m})$
  on the basis $\gamma_iw_i$ is $\lambda^{q_1m(4-i)}$, for $1\leq i\leq 7$,
  and setting $\mu^2 = \lambda^m$, we have that
  the action of the semisimple element $t(\mu)$
  is precisely that of
  ${\rm diag}(\mu^{q_1},\mu^{q_1})$
  on the basis $\mathcal B$.

  \medbreak
  \noindent{\bf{Cases 9,20,21:}}
  Here $H$ lies in a maximal rank subsystem subgroup
  of type $A_1A_1$, $A_2$ and $A_2$ respectively.
  By Corollary~\ref{cor:reductive_proper},
  $H$ is not epimorphic.
  \smallbreak
  
  \FloatBarrier
\renewcommand{\arraystretch}{1.3}
\begin{table}[ht]
  \begin{tabular}{| c | l |}
    \hline
    Cases & weights for $m=1$ (here $q=q_1=q_2=q_3=q_4$)\\
    \hline
    2,3,8 & $2q,q,q,0,-q,-q,-2q$  \\
    \hline
    4,5,6 & $q,0,q,0,-q,0,-q$  \\
    \hline
    7 & $q_5-q_1,q_5-2q_1,q_1,0,-q_1,2q_1-q_5,q_1-q_5$  \\
    \hline
    10--15,17,18 & $q,q,0,0,0,-q,-q$  \\
    \hline
    16 & $0,q,-q,0,q,-q,0$\\
    \hline
    19 & $q,2q,-q,0,q,-2q,-q$\\
    \hline
  \end{tabular}
  \caption{}
  \label{tab:G2_weights}
\end{table}
\FloatBarrier

\FloatBarrier
\renewcommand{\arraystretch}{1.3}
\begin{table}[ht]
  \begin{tabular}{| c | l | c |}
    \hline
   Cases& $w$ & $W$ \\
   \hline
  2,3 & $(v_2-v_3)\wedge v_4\wedge v_6$ & $\wedge^3(V)$ \\
  \hline
  4,5 & $2(c_6-1)v_2+v_4-2v_6$ & $V$ \\
  \hline
  6 & $c_6v_2-v_6$ & $V$\\
  \hline
  7 & $(q_1>q_5)$ $\overline{\otimes^{2q_1-q_5}v_1}\otimes \overline{\otimes^{q_1-q_5} v_6}$ & $S^{2q_1-q_5}(V)\otimes S^{q_1-q_5}(V)$\\
  & $(q_1=q_5)$ $v_1$ & $V$\\
  & $(q_5=2q_1)$ $v_6$&$V$\\
  & $(q_1<q_5)$ $\overline{\otimes^{q_5-2q_1}v_1}\otimes \overline{\otimes^{q_5-q_1} v_6}$ & $S^{q_5-2q_1}(V)\otimes S^{q_5-q_1}(V)$\\
  \hline
  8&$(c_6v_5-v_6)\wedge v_3\wedge v_4$&$\wedge^3(V)$\\
  \hline
  10 & $(2c_5-2c_4^2)v_3+(c_4-c_5)v_4+(2c_4-2)v_5$ & $V$\\
  \hline
  11 & $v_5-c_4v_3$ & $V$\\
  \hline
  12 & $(p=2)$ $v_5+c_4v_3$ & $V$\\
  & $(p>2)$ $c_4(c_4-3)v_3+c_4v_4+(1-c_4)v_5$ & $V$\\
  \hline
  13 & $(2c_4^2-2c_5)v_3+(c_5-c_4)v_4+(2-2c_4)v_5$ & $V$\\
  \hline
  14 & $-c_4^2v_3+c_4v_4+(2c_4-2)v_5$ & $V$\\
  \hline
  15 & $v_5$ & $V$\\
  \hline
  16 & $v_1$ & $V$\\
  \hline
  17 & $2v_3+c_5v_4-2v_5$ & $V$\\
  \hline
  18 & $v_3+v_5$ & $V$\\
  \hline
  19 & $(c_6v_3+v_7)\wedge v_4\wedge v_1$ & $\wedge^3(V)$\\
  \hline
    \end{tabular}
\caption{}
  \label{tab:G2_fixed_point}
\end{table}
\FloatBarrier

  For the remaining cases,
  we indicate in Table~\ref{tab:G2_weights} the weights of $T_H$ on the vectors in $\mathcal B$
  and in Table~\ref{tab:G2_fixed_point} we give a vector which is fixed by $H$ but not by $T$.
  We give the details only for Case 2 and Case 7 to illustrate the most involved cases.

\medbreak
\noindent {\bf{Case 2:}}
Set $q = q_1$. Then $t(\lambda)$ acts on the basis $\mathcal B$ as
$$
{\rm diag}(\lambda^{2qm},\lambda^{qm},\lambda^{qm},\lambda^0,\lambda^{-qm},\lambda^{-qm},\lambda^{-2qm}).
$$
We calculate that $u(x)(v_2-v_3) = v_2-v_3$,
$u(x)v_4 = v_4+2x^q(v_3-v_2)$ and
$u(x)v_6 = v_6-x^qv_4+x^{2q}(v_2-v_3)$.
Now we consider the $G$-module $\wedge^3(V)$,
where $(v_2-v_3)\wedge v_4\wedge v_6$ is fixed by $U_H$ and
has weight $0$ for $T_H$.
Moreover, this vector is not a weight vector for $T$. Hence $T_HU_H$ is not epimorphic in $G$.
\medbreak
\noindent{\bf{Case 7:}} Here the argument is slightly more complicated as we have two different $p$-powers, $q_1$ and $q_5$ and the argument
differs according to their relative sizes. Note first that $t(\lambda)$ acts on the ordered basis $\mathcal B$ as
$${\rm diag}(\lambda^{(q_5-q_1)m},\lambda^{(q_5-2q_1)m},\lambda^{q_1m},\lambda^0,\lambda^{-q_1m},\lambda^{(2q_1-q_5)m},
\lambda^{(q_1-q_5)m}).$$

Consider first the case where $q_1>q_5$.
Here we use the $G$-module $W = S^a(V)\otimes S^b(V)$, where $a = 2q_1-q_5$ and $b = q_1-q_5$.
The vector $\overline{\otimes^{a}v_1}\in S^a(W)$ is fixed by $U_H$ and has $T_H$ weight $a(q_5-q_1)m$, while the vector
$\overline{\otimes^{b}v_6}\in S^b(W)$ is fixed by $U_H$ and has $T_H$ weight $b(2q_1-q_5)$. Thus in $W$, we have the $U_H$-fixed vector $\overline{\otimes^{a}v_1}\otimes\overline{\otimes^{b}v_6}$ of
$T_H$-weight $0$, but which is not fixed by $\alpha_1^{\vee}(k^{\times})\subset T$.

For the case where $q_1=q_5$ it is easy to see that $v_1$ is of weight $0$ for $T_H$ and fixed by $U_H$.
It is also easy to rule out the case where $q_5=2q_1$ since here the vector $v_6$ is of $T_H$ weight $0$ and is fixed by $U_H$.

Finally, we must consider the case where $q_5>2q_1$. Now we set $W = S^a(V)\otimes S^b(V)$, where $a = q_5-2q_1$ and $b = q_5-q_1$, and
we argue as above using $\overline{\otimes^{a}v_1}\otimes\overline{\otimes^{b}v_6}$.
\end{proof}

\section{Existence of $3$-dimensional closed epimorphic subgroups}
\label{sec:existence}

In this section, we exhibit a $3$-dimensional epimorphic subgroup $H$ in each of the groups $\SL_3(k)$, $\Sp_4(k)$ and $G_2(k)$, defined over the field $k$, with $\operatorname{char}(k)=p>0$. For $\SL_3(k)$ we may choose $H$ to be the radical of a maximal parabolic subgroup by \S\ref{subsec:criteria}.(8). For the other two groups fix $q\neq 1$ a $p$-power. 

Let $G = {\rm Sp}_4(k)$. Let $A = \langle u_{+}(x), u_{-}(x)\ |\ x\in k\rangle$ be an
$A_1$-type group,
diagonally embedded in the maximal rank subgroup
$\langle U_{\pm\alpha_1}\rangle\times \langle U_{\pm(\alpha_1+2\alpha_2)}\rangle\subseteq {\rm Sp}_4(k)$,
via the morphisms
$$
u_{+}(x)=u_{\alpha_1}(x)u_{\alpha_1+2\alpha_2}(x^{q})
\quad\text{and}\quad
u_{-}(x)=u_{-\alpha_1}(x)u_{-\alpha_1-2\alpha_2}(x^{q}).
$$
Then $A$ has a maximal torus given by $T_A = \{h(\lambda) = \alpha_1^{\vee}(\lambda^{1+q})\alpha_2^{\vee}(\lambda^{2q})\ |\ \lambda\in k^{\times}\}$, so
that $\alpha_1(h(\lambda)) = \lambda^{2}$ and $\alpha_2(h(\lambda)) = \lambda^{q-1}$.

Let $X = \{u_{+}(x)\ |\ x\in k\}$ and consider the subgroup $H = B_AU_{\alpha_1+\alpha_2}$, where $B_A$ is the Borel subgroup $T_{A}X$ of $A$.
The root group $U_{\alpha_1+\alpha_2}$ is normalized by $B_A$, indeed centralised by
  $X$ and $T_A$ acts with weight $q+1$. We claim that $Y=\langle A, U_{\alpha_1+\alpha_2}\rangle = G$, which, by \S\ref{subsec:criteria}.(9), shows that $H$ is epimorphic in $G$.

  Notice that $Y$ acts irreducibly on the $4$-dimensional Weyl module $V$ of highest weight $\omega_2$.
  Indeed, the only $A$-invariant subspaces of $V$ are the non-isomorphic irreducible $A$-modules
  $V_{\omega_2}+V_{\omega_2-\alpha_1-2\alpha_2}$ and $V_{\omega_2-\alpha_2}+V_{\omega_2-\alpha_1-\alpha_2}$.
  Since $U_{\alpha_1+\alpha_2}$ preserves neither of these subspaces,
  $Y$ acts irreducibly on $V$. 
  Now, using \cite[Theorem 3]{Seitz2}, we have the following possibilities for $Y$:
\begin{itemize}
\item $Y=G$, or
\item $Y$ preserves a nontrivial decomposition $V = V_1\otimes V_2$ and $Y$ is a subgroup of ${\rm Sp}(V_1)\cdot {\rm SO}(V_2)$, or
\item $p=2$ and $Y={\rm SO}(V)$, or
\item $Y$ is simple.
\end{itemize}

Since $\dim Y\geq 4$ and $Y$ has an irreducible $4$-dimensional representation, if $Y$ is simple then $Y=G$. Next we argue that in fact $\dim Y\geq 7$ and thus the second and third  possibilities are ruled
out and we have $Y = G$ as
desired. We have ${\rm Lie}(Y)\supseteq {\rm Lie}(A) + {\rm Lie}(U_{\alpha_1+\alpha_2})$. Let $A$ act on the root vector
$e_{\alpha_1+\alpha_2}$, which is a maximal vector of an $A$-composition factor of ${\rm Lie}(G)$. The weight of this vector being $q+1$, we see that there is a $4$-dimensional irreducible composition factor,
and we now have 7 distinct weights in the action of $A$ on ${\rm Lie}(Y)$, establishing the claim.


Now turn to the case of $G=G_2(k)$. Here again, we set $A =\langle u_{+}(x), u_{-}(x)\ |\ x\in k\rangle$ to be an $A_1$-subgroup
diagonally embedded in the maximal rank subgroup $\langle U_{\pm\alpha_1}\rangle\times \langle U_{\pm(3\alpha_1+2\alpha_2)}\rangle\subseteq G_2(k)$,
via the morphisms
$$
u_{+}(x)=u_{\alpha_1}(x)u_{3\alpha_1+2\alpha_2}(x^{q})
\quad\text{and}\quad
u_{-}(x)=u_{-\alpha_1}(x)u_{-3\alpha_1-2\alpha_2}(x^{q}).
$$
Arguing as above, we see that $B_A$ normalises the root group $U_{3\alpha_1+\alpha_2}$ and we set $H = B_AU_{3\alpha_1+\alpha_2}$ and
$Y = \langle A,U_{3\alpha_1+\alpha_2}\rangle$. We will show that $Y$ acts irreducibly on the Weyl module $V$ of highest weight $\omega_1$.
Then noting that $\dim Y\geq 4$, the main theorem of \cite{Testerman1} shows that either $Y=G$, or $Y$ is non-simple semisimple, or  $p=3$ and $Y$ lies in a short root subsystem subgroup of type
$A_2$. The latter is not possible as $Y\supset U_{3\alpha_1+\alpha_2}$. The second case is also not possible: there is no non-simple semisimple algebraic group with a $7$-dimensional faithful irreducible
representation and the only such group with a $6$-dimensional faithful irreducible representation when $p=2$ is of type $A_2A_1$ which cannot appear as a subgroup of $G_2(k)$. Thus we conclude that $Y=G$ and we complete the argument by applying \S\ref{subsec:criteria}.(9).

To see that $Y$ acts irreducibly, we first consider the action of $A$ on $V$. Here we have two non-isomorphic
summands $V_{\omega_1}+V_{\omega_1-\alpha_1}+V_{\omega_1-3\alpha_1-2\alpha_2}+V_{\omega_1-4\alpha_1-2\alpha_2}$ and
$V_{\omega_1-\alpha_1-\alpha_2}+V_{\omega_1-2\alpha_1-\alpha_2}+V_{\omega_1-3\alpha_1-\alpha_2}$. (Note that if $p=2$, we have
$V_{\omega_1-2\alpha_1-\alpha_2}=0$.) Moreover, $U_{3\alpha_1+\alpha_2}$ stabilizes neither of the two subspaces.
Hence $Y$ acts irreducibly on $V$ as claimed.

\end{document}